\newcommand{\Eff}{{\cal E}\! f\! f}
\newcommand{\dar}{{\downarrow}}
\newtheorem{proposition}{Proposition}[section]
\newtheorem{lemma}[proposition]{Lemma}
\newtheorem{corollary}[proposition]{Corollary}
\newtheorem{definition}[proposition]{Definition}
\newtheorem{theorem}[proposition]{Theorem}
\newtheorem{exrcise}{Exercise}
\newtheorem{example}[proposition]{Example}
\newtheorem{remark}[proposition]{Remark}
\newcounter{opgaveteller}
\newcounter{lijst-teller}
\newcounter{boon}
\newcounter{boon2}
\newenvironment{proof}{\noindent {\bf Proof}. \nopagebreak }{\nopagebreak\hfill\rule{2mm}{3mm}}
\newenvironment{rlist}%
   {\begin{list}{\roman{lijst-teller}\/)\hfil}%
              {\labelwidth 2em%
               \leftmargin\labelwidth\advance\leftmargin by\labelsep%
               \usecounter{lijst-teller}}}%
   {\end{list}}
\newenvironment{r'list}%
   {\begin{list}{\roman{lijst-teller}\/)$'$\hfil}%
              {\labelwidth 2em%
               \leftmargin\labelwidth\advance\leftmargin by\labelsep%
               \usecounter{lijst-teller}}}%
   {\end{list}}
\newenvironment{arlist}
    {\begin{list}{\arabic{boon2}\/)\hfil}%
                 {\labelwidth 2em%
                 \leftmargin\labelwidth\advance\leftmargin by\labelsep%
                 \usecounter{boon2}}}%
    {\end{list}}
\title{More on Geometric Morphisms between Realizability Toposes}
\author{Eric Faber \and Jaap van Oosten\footnote{Corresponding author. E-mail address: {\tt j.vanoosten@uu.nl}}\\
Department of Mathematics, Utrecht University\\
P.O.Box 80.010, 3508 TA Utrecht\\
The Netherlands}
\date{August 18, 2014}
\begin{document}

\maketitle
\begin{abstract}Geometric morphisms between realizability toposes are studied in terms of morphisms between partial combinatory algebras (pcas). The morphisms inducing geometric morphisms (the {\em computationally dense\/} ones) are seen to be the ones whose `lifts' to a kind of completion have right adjoints. We characterize topos inclusions corresponding to a general form of relative computability. We characterize pcas whose realizability topos admits a geometric morphism to the effective topos.
\end{abstract}

\noindent {\bf Keywords}: realizability toposes, partial combinatory algebras, geometric morphisms, local operators.
\section*{Introduction}
The study of geometric morphisms between realizability toposes was initiated by John Longley in his thesis \cite{LongleyJ:reatls}. Longley started an analysis of partial combinatory algebras (the structures underlying realizability toposes; see section~\ref{pcasection}) by defining a 2-categorical structure on them.

Longley's ``applicative morphisms'' characterize regular functors between categories of assemblies that commute with the global sections functors to Set. Longley was thus able to identify a class of geometric morphisms with adjunctions between partial combinatory algebras. The geometric morphisms thus characterized satisfy two constraints:\begin{arlist}
\item They are {\em regular}, that is: their direct image functors preserve regular epimorphisms.
\item They restrict to geometric morphisms between categories of assemblies.\end{arlist}
Restriction 1 was removed by Pieter Hofstra and the second author in \cite{HofstraP:ordpca}, where a new notion of applicative morphisms was defined, the {\em computationally dense\/} ones; these are exactly those applicative morphisms for which the induced regular functor on assemblies has a right adjoint (but the morphism itself need not have a right adjoint in the 2-category of partial combinatory algebras).

Restriction 2 was removed by Peter Johnstone in his recent paper \cite{JohnstonePT:geomrt}, where he proved that {\em every\/} geometric morphism between realizability toposes satisfies this condition.

Moreover, Johnstone gave a much simpler formulation of the notion of computational density.

In the present paper we characterize the computationally dense applicative morphisms in yet another way: as those which, when ``lifted'' to the level of {\em order-pcas}, do have a right adjoint. We also have a criterion for when the geometric morphism induced by a computationally dense applicative morphism is an inclusion.

In a short section we collect some material on total combinatory algebras, and formulate a criterion for when a partial combinatory algebra is isomorphic to a total one.

We prove that every realizability topos which is a subtopos of Hyland's {\em effective topos\/} is on a partial combinatory algebra of computations with an ``oracle'' for a partial function on the natural numbers. We employ a generalization of this ``computations with an oracle for $f$'' construction to arbitrary partial combinatory algebras, described in \cite{OostenJ:genfrr} and denoted $A[f]$. Generalizing results by Hyland (\cite{HylandJ:efft}) and Phoa (\cite{PhoaW:relcet}), we show that the inclusion of the realizability topos on $A[f]$ into the one on $A$ corresponds to the least local operator ``forcing $f$ to be realizable''.

The paper closes with some results about local operators in realizability toposes. We characterize the realizability toposes which admit a (necessarily essentially unique) geometric morphism to the effective topos, as those which have no De Morgan subtopos apart from Set.

In an effort to be self-contained, basic material is collected in section~\ref{intro}, which also establishes notation and terminology.

\section{Background}\label{intro}
\subsection{Partial Combinatory Algebras}\label{pcasection}
A {\em partial combinatory algebra\/} (or, as Johnstone calls them in \cite{JohnstonePT:glecrt,JohnstonePT:geomrt}, {\em Sch\"onfinkel algebra}) is a structure with a set $A$ and a partial binary function on it, which we denote by $a,b\mapsto ab$. This map is called {\em application}; the idea is that every element of $A$ encodes a partial function on $A$, and $ab$ is the result of the function encoded by $a$ applied to $b$.

The motivating example is the structure ${\cal K}_1$ on the set of natural numbers, where $ab$ is the outcome of the $a$-th Turing machine with input $b$.

Partial functions give rise to partial terms. In manipulating these we employ the following notational conventions:\begin{arlist}
\item The expression $t\dar$ means that the term $t$ is defined, or: denotes an element of $A$. We intend $t\dar$ to also imply that $s\dar$ for every subterm $s$ of $t$.
\item We employ association to the left: $abc$ means $(ab)c$. This economizes on brackets, but we shall be liberal with brackets wherever confusion is possible.
\item The expression $s\preceq t$ means: whenever $t$ denotes, so does $s$; and in that case, $s$ and $t$ denote the same element of $A$. We write $s\simeq t$ for the conjunction of $s\preceq t$ and $t\preceq s$. The expression $t=s$ means $s\simeq t$ and $t\dar$.\end{arlist}
With these conventions, we define:
\begin{definition}\label{pcadef}\em A set $A$ with a partial binary map on it is a {\em partial combinatory algebra\/} (pca) if there exist elements $\sf k$ and $\sf s$ in $A$ which satisfy, for all $a,b,c\in A$:\begin{rlist}
\item ${\sf k}ab=a$
\item ${\sf s}ab\dar$
\item ${\sf s}abc\preceq ac(bc)$\end{rlist}\end{definition}
This definition is mildly nonstandard, since most sources require $\simeq$ instead of $\preceq$ in clause iii). However, in our paper \cite{FaberE:effott} we show that in fact, every pca in our sense is isomorphic to a pca in the stronger sense (where the isomorphism is in the sense of applicative morphisms, see section~\ref{pcamorsection}), so the two definitions are essentially the same.

It is a consequence of definition~\ref{pcadef} that for any term $t$ which contains variables $x_1,\ldots ,x_{n+1}$, there is a term $\langle x_1\cdots x_{n+1}\rangle t$ without any variables, which has the following property: for all $a_1,\ldots a_{n+1}\in A$ we have\begin{itemize}
\item[] $(\langle x_1\cdots x_{n+1}\rangle t)a_1\cdots a_n\dar$
\item[] $(\langle x_1\cdots x_{n+1}\rangle t)a_1\cdots a_{n+1}\preceq t(a_1,\ldots ,a_{n+1})$\end{itemize}
Every pca $A$ has {\em pairing\/} and {\em unpairing\/} combinators: there are elements $\pi ,\pi _0,\pi _1$ of $A$ satisfying $\pi _0(\pi ab)=a$ and $\pi _1(\pi ab)=b$.

Moreover, every pca has {\em Booleans\/} {\sf T} and {\sf F} and a {\em definition by cases\/} operator: an element $u$ satisfying $u{\sf T}ab=a$ and $u{\sf F}ab=b$; such an element $u$ is seen as operating on three arguments $v,a,b$, which operation is often denoted by
$$\text{{\sf if} $v$ {\sf then} $a$ {\sf else} $b$}$$
In this paper we assume that ${\sf T}={\sf k}$ and ${\sf F}={\sf k}({\sf skk})$, so ${\sf T}ab=a$ and ${\sf F}ab=b$.

Finally, we mention that every pca $A$ comes equipped with a copy $\{\overline{n}\, |\, n\in\mathbb{N}\}$ of the natural numbers: the {\em Curry numerals}. For every $n$-ary partial computable function $F$, there is an element $a_F\in A$, such that for all $n$-tuples of natural numbers $k_1,\ldots k_n$ in the domain of $F$, $a_F\overline{k_1}\cdots\overline{k_n}=\overline{F(k_1,\ldots ,k_n)}$.
For more background on pcas we refer to \cite{OostenJ:reaics}, chapter 1.
\subsection{Assemblies}\label{asssection}
Every pca determines a category of {\em assemblies\/} on $A$, denoted ${\rm Ass}(A)$. An object of ${\rm Ass}(A)$ is a pair $(X,E)$ where $X$ is a set and $E$ associates to each element $x$ of $X$ a nonempty subset $E(x)$ of $A$. A morphism $(X,E)\to (Y,F)$ between assemblies on $A$ is a function $f:X\to Y$ of sets, for which there is an element $a\in A$ which {\em tracks\/} $f$, which means that for every $x\in X$ and every $b\in E(x)$, $ab\dar$ and $ab\in F(f(x))$.

The category ${\rm Ass}(A)$ has finite limits and colimits, is locally cartesian closed (hence regular), has a natural numbers object and a strong-subobject classifier (which is called a {\em weak subobject classifier\/} in \cite{JohnstonePT:skee}); hence it is a quasitopos.

There is an adjunction $\diagram {\rm Set}\rto<-0.5ex>_{\nabla} & {{\rm Ass}(A)}\lto<-0.5ex>_{\Gamma}\enddiagram$, $\Gamma\dashv\nabla$: here $\Gamma$ is the global sections functor (or the forgetful functor $(X,E)\mapsto X$) and $\nabla$ sends a set $X$ to the assembly $(X,E)$ where $E(x)=A$ for every $x\in X$.

The category ${\rm Ass}(A)$ is, except in the trivial case $A=1$, not {\em exact}. Its exact completion as a regular category (sometimes denoted ${\rm Ass}(A)_{\rm ex/reg}$) is a topos, the {\em realizability topos on\/} $A$, which we denote by ${\sf RT}(A)$ with only one exception: the topos ${\sf RT}({\cal K}_1)$ is called the {\em effective topos\/} and denoted $\Eff$. The effective topos was discovered by Martin Hyland around 1979 and described in the landmark paper \cite{HylandJ:efft}. The notation $\Eff$ serves both to underline the special place of the effective topos among realizability toposes (as we shall see in this paper) and the special place of ${\cal K}_1$ among pcas, and to acknowledge the seminal character of Hyland's work.
\subsection{Morphisms of Pcas}\label{pcamorsection}
In his thesis \cite{LongleyJ:reatls}, John Longley laid the groundwork for the study of the dynamics of pcas, by defining a useful 2-category structure on the class of pcas.
\begin{definition}\label{apmordef}\em Let $A$ and $B$ be pcas. An {\em applicative morphism\/} $A\to B$ is a total (or, as some people prefer, `entire') relation from $A$ to $B$, which we see as a map $\gamma$ from $A$ to the collection of nonempty subsets of $B$, which has a {\em realizer}, that is: an element $r\in B$ satisfying the following condition: whenever $a,a'\in A$ are such that $aa'\dar$, and $b\in\gamma (a),b'\in\gamma (a')$, then $rbb'\dar$ and $rbb'\in\gamma (aa')$.

Given two applicative morphisms $\gamma ,\delta :A\to B$ we say $\gamma\leq\delta$ if some element $s$ of $B$ satisfies: for every $a\in A$ and $b\in\gamma (a)$, $sb\dar$ and $sb\in\delta (a)$.\end{definition}
Pcas, applicative morphisms and inequalities between them form a preorder-enriched category. Applicative morphisms have both good mathematical properties and a computational intuition: if a pca is thought of as a model of computation, then an applicative morphism is a simulation of one model into another.

Mathematically, applicative morphisms correspond to `regular $\Gamma$-functors' between categories of assemblies: these are regular functors (functors preserving finite limits and regular epimorphisms) ${\rm Ass}(A)\to {\rm Ass}(B)$ which make the diagram
$$\diagram {{\rm Ass}(A)}\rrto\drto_{\Gamma} & & {{\rm Ass}(B)}\dlto^{\Gamma} \\
 & {\rm Set} & \enddiagram$$
commute. The construction is as follows: for an applicative morphism $\gamma :A\to B$, the functor $\gamma ^{\ast}$ sends the $A$-assembly $(X,E)$ to the $B$-assembly $(X,\gamma {\circ}E)$ ($\gamma {\circ}E$ is composition of relations). We have the following theorem:
\begin{theorem}[Longley]\label{longleythm} Every regular $\Gamma$-functor ${\rm Ass}(A)\to {\rm Ass}(B)$ is isomorphic to one of the form $\gamma ^{\ast}$ for an applicative morphism $\gamma :A\to B$. Moreover, there is a (necessarily unique) natural transformation $\gamma ^{\ast}\Rightarrow\delta ^{\ast}$, precisely when $\gamma\leq\delta$.\end{theorem}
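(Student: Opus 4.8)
The plan is to recover an applicative morphism $\gamma$ from the functor and then show the functor is naturally isomorphic to $\gamma^{\ast}$. Because the triangle over ${\rm Set}$ commutes, $\Gamma$ forces any regular $\Gamma$-functor $F$ to act as the identity on underlying sets and underlying functions; so $F(X,E)=(X,\hat E)$ for a reassignment $\hat E$ of realizers, and $F(f)=f$. I would single out the generic partitioned assembly $A_{\delta}:=(A,\,a\mapsto\{a\})$ and \emph{define} $\gamma$ by $F(A_{\delta})=(A,\gamma)$; totality is immediate since realizer-sets are nonempty. To see that $\gamma$ is an applicative morphism I would extract a realizer from the application morphism: the set $D=\{(a,a')\mid aa'\dar\}$ is a full (hence regular-mono) subobject of $A_{\delta}\times A_{\delta}$ with structure $(a,a')\mapsto\{\pi aa'\}$, and application $D\to A_{\delta}$, $(a,a')\mapsto aa'$, is tracked by $\langle z\rangle(\pi_0 z)(\pi_1 z)$. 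As $F$ preserves finite products and regular monos, $F(A_{\delta}\times A_{\delta})=(A,\gamma)\times(A,\gamma)$ and $F(D)$ carries the restricted structure $(a,a')\mapsto\{\pi bb'\mid b\in\gamma(a),\,b'\in\gamma(a')\}$; the tracker of $F(\text{application})$ then sends $\pi bb'$ into $\gamma(aa')$, and currying yields the required $r\in B$.

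For $F\cong\gamma^{\ast}$ I would compare the two functors through the underlying-identity map $\alpha_{(X,E)}\colon F(X,E)\to\gamma^{\ast}(X,E)=(X,\gamma\circ E)$. One direction is cheap: every assembly is covered by the partitioned assembly $D_X=\{(x,a)\mid a\in E(x)\}$ (structure $(x,a)\mapsto\{a\}$) via the id-tracked regular epi $e\colon(x,a)\mapsto x$, and also maps to $A_{\delta}$ by $g\colon(x,a)\mapsto a$. Since $F$ preserves regular epis, realizers in $\hat E(x)$ lift along $F(e)$ into $F(D_X)$ and are then pushed by the tracker of $F(g)$ into $\gamma\circ E(x)$, giving a single $t$ tracking $\alpha_{(X,E)}$, naturally. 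To upgrade $\alpha$ to an isomorphism I would reduce to partitioned assemblies: $\alpha_{(X,E)}$ is a mono (underlying identity), and in the naturality square for $e$ the composite $\alpha_{(X,E)}\circ F(e)$ equals $\gamma^{\ast}(e)\circ\alpha_{D_X}$; if $\alpha_{D_X}$ is iso this composite is a regular epi, and right-cancellation of regular epis together with monicity make $\alpha_{(X,E)}$ iso. On full subobjects of $A_{\delta}$ (partitioned assemblies with \emph{injective} realizer-map) $\alpha$ is visibly iso, because $F$ preserves the defining regular mono into $A_{\delta}$ and the induced structure is exactly $\gamma$.

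The hard part will be the remaining \emph{reverse} tracker for partitioned assemblies whose realizer-map is not injective: a single $w\in B$ with $w\,\gamma(a)\subseteq\hat E(x)$ for all $a\in E(x)$ simultaneously. The individual point-inclusions $[a]\to(X,E)$ do supply such a $w$ pointwise, but not uniformly, and — as one sees by contemplating a putative $F$ whose realizer-sets over a single fibre are disjoint — neither preservation of finite products and equalizers nor of finite coproducts and coequalizers of kernel pairs forces the uniformity; the entire content of the statement is that no regular $\Gamma$-functor behaves so. I expect this uniformity to be the crux, to be obtained by a definability argument in the spirit of Longley: apply $F$ to a single \emph{generic} evaluation morphism out of an $A_{\delta}$-based object and read off one tracker valid across the whole fibre, after which the reductions above promote it to all of ${\rm Ass}(A)$.

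Finally, for the $2$-cells, given $\gamma\le\delta$ with witness $s$ the underlying-identity maps $(X,\gamma\circ E)\to(X,\delta\circ E)$ are all tracked by the same $s$ and are natural, giving $\gamma^{\ast}\Rightarrow\delta^{\ast}$. Conversely any natural transformation $\gamma^{\ast}\Rightarrow\delta^{\ast}$ has underlying-identity components, since applying $\Gamma$ yields a natural endomorphism of $\Gamma_A$, which is the identity (test naturality against the global points $1\to(X,E)$); evaluating the transformation at $A_{\delta}$ then returns an $s$ with $s\,\gamma(a)\subseteq\delta(a)$, that is $\gamma\le\delta$. Uniqueness is automatic because a morphism of assemblies is determined by its underlying function.
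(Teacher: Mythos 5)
Note first that the paper does not actually prove Theorem~\ref{longleythm}: it is quoted from Longley's thesis, with van Oosten's book as an alternative reference (see the pointer inside the proof of Theorem~\ref{assflpthm}). So your proposal has to be measured against that standard proof. Your reductions are correct and essentially follow it: normalizing $F$ to be the identity on underlying sets, reading $\gamma$ off from $F(A,a\mapsto\{a\})$, extracting the applicative realizer from the preserved regular mono $D\to A_\delta\times A_\delta$ together with the application map, building the comparison $\alpha\colon F\Rightarrow\gamma^{\ast}$ by lifting realizers along the preserved cover $D_X\to (X,E)$, reducing invertibility of $\alpha$ to partitioned assemblies by mono/regular-epi orthogonality, and the whole treatment of the 2-cells. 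All of that is sound.

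But the step you leave open is precisely the theorem's content, and your closing plan (``apply $F$ to a single generic evaluation morphism out of an $A_\delta$-based object'') does not supply it: every tracker you can extract that way runs in the covariant direction, from the unknown sets $\hat E(x)$ towards $\gamma$, never back. Even using regularity to the hilt, the best you get is this: factoring $e\colon (X,\{e(\cdot)\})\to A_\delta$ through its image and applying $F$ yields a $u$ with $u\,\gamma(e(x))\subseteq\bigcup_{x'\in e^{-1}(e(x))}\hat E(x')$ --- you land in the union over the fibre, exactly the obstruction you describe, with no way to pin down the correct point. What closes the gap in Longley's proof is his Proposition 2.3.3, cited in this paper inside the proof of Corollary~\ref{cdcharcor}: \emph{every} $\Gamma$-functor commutes with $\nabla$. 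Granting that, the non-injective case is immediate from finite-limit preservation, because $(X,x\mapsto\{e(x)\})$ is the pullback of $\nabla(e)\colon\nabla X\to\nabla A$ along the unit $A_\delta\to\nabla A$, and the pullback of $\nabla_B X\to\nabla_B A\leftarrow (A,\gamma)$ is uniformly isomorphic to $(X,x\mapsto\gamma(e(x)))$. The lemma itself is proved non-constructively, by diagonalization: writing $F(\nabla X)=(X,H)$ with $|X|\geq |B|$ and supposing $\bigcap_{x}H(x)=\emptyset$, one chooses (axiom of choice) for each $t\in B$ a name $\langle t\rangle\in X$, a realizer $c_t\in H(\langle t\rangle)$, and a point $x_t$ with $tc_t\notin H(x_t)$ whenever $tc_t$ is defined; the set-map $f$ with $f(\langle t\rangle)=x_t$ is a morphism $\nabla X\to\nabla X$ whose $F$-image can be tracked by no $t\in B$, a contradiction; smaller $X$ follow by splitting a surjection. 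This $\nabla$-preservation lemma, not a further massaging of product/equalizer/coequalizer preservation, is the missing idea; without it your argument stops exactly where the real work begins.
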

Since ${\sf RT}(A)$ is the ex/reg completion of ${\rm Ass}(A)$, any functor of the form $\gamma ^{\ast}$ extends essentially uniquely to a regular functor ${\sf RT}(A)\to {\sf RT}(B)$, which we also denote by $\gamma ^{\ast}$. So it makes sense to study geometric morphisms ${\sf RT}(A)\to {\sf RT}(B)$ from the point of view of applicative morphisms $A\to B$: since the inverse image functor of any geometric morphism is regular, in order to study geometric morphisms ${\sf RT}(B)\to {\sf RT}(A)$ one looks at those applicative morphisms $\gamma :A\to B$ for which $\gamma ^{\ast}$ has a right adjoint.

The following definition is from \cite{HofstraP:ordpca}. Let us extend our notational conventions about application a bit: for $a\in A,\alpha\subseteq A$ we write $a\alpha\dar$ if $ax\dar$ for every $x\in\alpha$, and in this case we write $a\alpha$ for the set $\{ ax\, |\, x\in\alpha\}$.
\begin{definition}\label{cddef}\em An applicative morphism $\gamma :A\to B$ is {\em computationally dense\/} if there is an element $m\in B$ such that the following holds:\begin{itemize}
\item[] For every $b\in B$ there is an $a\in A$ such that for all $a'\in A$: if $b\gamma (a')\dar$, then $aa'\dar$ and $m\gamma (aa')\dar$ and $m\gamma (aa')\subseteq b\gamma (a')$.\end{itemize}\end{definition}
\begin{theorem}[\cite{HofstraP:ordpca}]\label{cdgmthm} An applicative morphism $\gamma :A\to B$ induces a geometric morphism ${\sf RT}(B)\to {\sf RT}(A)$ precisely when it is computationally dense.\end{theorem}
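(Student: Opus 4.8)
The plan is to reduce the statement to the existence of a right adjoint to the regular functor $\gamma^{\ast}$ and then to settle that existence at the level of assemblies, where it matches the combinatory content of Definition~\ref{cddef} directly. Since $\gamma^{\ast}:{\sf RT}(A)\to{\sf RT}(B)$ is regular, it is in particular left exact; hence $\gamma$ induces a geometric morphism ${\sf RT}(B)\to{\sf RT}(A)$ exactly when $\gamma^{\ast}$ has a right adjoint $\gamma_{\ast}$, and $(\gamma^{\ast},\gamma_{\ast})$ is then automatically a geometric morphism. Because ${\sf RT}(A)$ is the exact completion of ${\rm Ass}(A)$ and $\gamma^{\ast}$ is the exact extension of its restriction to assemblies, I would use the $2$-functoriality of the exact/regular completion: a regular functor between regular categories has a right adjoint precisely when its exact extension does, the extension of the right adjoint being built from its action on equivalence relations (a right adjoint is left exact) and the hom-set bijection transposing through the presentation of objects of the completion as quotients of assemblies. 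This replaces $\gamma^{\ast}$ on toposes by $\gamma^{\ast}:{\rm Ass}(A)\to{\rm Ass}(B)$, $(X,E)\mapsto(X,\gamma{\circ}E)$.

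For the implication from computational density to the existence of $\gamma_{\ast}$, I would write the right adjoint down explicitly. Fixing a realizer $m\in B$ witnessing Definition~\ref{cddef}, for a $B$-assembly $(Y,F)$ set $\gamma_{\ast}(Y,F)=(Y,G)$ with
$$G(y)=\{a\in A\mid m\gamma(a)\dar\text{ and }m\gamma(a)\subseteq F(y)\}.$$
The first point is that $G(y)$ is nonempty: choosing $b_0\in F(y)$ and applying computational density to the constant function ${\sf k}b_0\in B$ produces, at any fixed argument, an $a$ with $m\gamma(a)\subseteq\{b_0\}\subseteq F(y)$. The adjunction $\gamma^{\ast}\dashv\gamma_{\ast}$ is then proved by showing that a function $f:X\to Y$ is tracked as a morphism $\gamma^{\ast}(X,E)\to(Y,F)$ if and only if it is tracked as a morphism $(X,E)\to\gamma_{\ast}(Y,F)$; since morphisms of assemblies are just trackable functions, this equality of tracked-function sets is the hom-set bijection, and naturality is automatic. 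In one direction, computational density applied to a $B$-realizer $r$ of $f$ yields, for each $d\in E(x)$, an element whose $\gamma$-image is sent by $m$ into $r\gamma(d)\subseteq F(f(x))$, producing an $A$-realizer. In the other, a fixed choice of element of $\gamma(s)$ (for an $A$-realizer $s$), together with the realizer of $\gamma$ and the element $m$, assembles a single $B$-realizer; the key is that $m$ ``undoes'' on the $B$-side what $G$ built in, by the defining inclusion $m\gamma(a)\subseteq F(y)$.

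For the converse, I would assume $\gamma^{\ast}\dashv\gamma_{\ast}$ on assemblies and read off the data of Definition~\ref{cddef} from the adjunction at a generic object. Taking the partitioned assembly $\beta=(B,\{{\cdot}\})$ (each $b$ realized by itself), the counit $\varepsilon_{\beta}:\gamma^{\ast}\gamma_{\ast}\beta\to\beta$ has a tracker, which I would take as the element $m$. For a given $b\in B$, I would reflect ``application of $b$'' through the adjunction: the transpose of a canonical map associated with $b$ supplies an element $a\in A$ whose behaviour on arguments $a'$ is governed, via the triangle identities and naturality of the counit, by the inclusion $m\gamma(aa')\subseteq b\gamma(a')$ demanded by computational density. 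The content here is entirely combinatory: one must check that the single $m$ works for all $b$, and that the $a$ extracted from the transpose meets the per-argument definedness and inclusion uniformly.

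The main obstacle, in my view, is twofold. First, the passage between ${\rm Ass}$ and ${\sf RT}$: right adjoints need not preserve regular epimorphisms, so the exact-completion $2$-functor does not act on $\gamma_{\ast}$ directly, and one must instead verify by hand that the assembly-level adjunction extends --- equivalently, that the objects representing $(X,E)\mapsto{\sf RT}(B)(\gamma^{\ast}(X,E),Z)$ exist for every $Z$, using the coequalizer presentation of $Z$ by assemblies. Second, in the converse direction the delicate step is \emph{uniformity}: the abstract adjunction gives, for each object, \emph{some} realizer, whereas Definition~\ref{cddef} demands one element $m$ and, for each $b$, one element $a$ satisfying a first-order combinatory inequality; converting the naturality squares of the counit into these uniform combinators is where the real work lies.
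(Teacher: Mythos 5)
First, a point of orientation: this paper does not prove Theorem~\ref{cdgmthm} at all --- it is imported from \cite{HofstraP:ordpca}, and the paper's own nearby results (Corollaries~\ref{cdcharthm} and~\ref{cdcharcor}) are \emph{deduced from} it together with Johnstone's Theorem~\ref{ptj}. So your proposal has to be measured against the proof in the literature, which runs through triposes. Your assembly-level work is essentially correct and standard: the formula $G(y)=\{a\in A\mid m\gamma(a)\dar\text{ and }m\gamma(a)\subseteq F(y)\}$ is exactly the direct image on assemblies used by Hofstra--van Oosten, your nonemptiness argument and the tracked-function bijection both check out, and extracting the pair $(m,g)$ from the counit at $(B,\{\cdot\})$ and transposed points is in substance Johnstone's argument for recovering density.

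The genuine gap is the bridge you use between ${\rm Ass}$ and ${\sf RT}$, and it is not a repairable detail: your stated lemma --- that a regular functor has a right adjoint precisely when its exact extension does, ``the extension of the right adjoint being built from its action on equivalence relations'' --- is false, and the paper itself contains the refutation. If the direct image of the induced geometric morphism could be computed on a presentation $Y/R$ (with $Y$ an assembly and $R$ an equivalence relation in assemblies) as $\gamma_{\ast}(Y)/\gamma_{\ast}(R)$, then the direct image would send the cover $Y\twoheadrightarrow Y/R$ to a cover; since every regular epimorphism in ${\sf RT}(B)$ is factored by such a cover, the direct image would preserve all regular epimorphisms, i.e.\ \emph{every} geometric morphism between realizability toposes would be regular. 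By Corollary~\ref{regprojcor} this would make every computationally dense morphism projective, contradicting the example at the end of section~\ref{geommorsection}: the dense morphism ${\cal K}_2^{\rm rec}\to{\cal K}_1$ is not projective, so the geometric morphism $\Eff\to{\sf RT}({\cal K}_2^{\rm rec})$ it induces is not regular. The underlying mathematical obstruction is that $\gamma_{\ast}$, being a right adjoint, does not commute with quotients (existential quantification), which is exactly why your ``totality'' clause fails if you try to push a functional relation into a quotient through $\gamma_{\ast}$ pointwise.

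Consequently, both halves of your reduction lean on a claim that is either false or is the theorem itself. In the direction density $\Rightarrow$ geometric morphism, your fallback (``verify by hand that $(X,E)\mapsto{\sf RT}(B)(\gamma^{\ast}(X,E),Z)$ is representable'') is not a deferred detail but the entire content of the Hofstra--van Oosten theorem; the known proof exhibits an adjunction of realizability triposes ($U\mapsto\gamma(U)$ with right adjoint $V\mapsto\{a\mid m\gamma(a)\dar,\ m\gamma(a)\subseteq V\}$) and then invokes Pitts' tripos-to-topos construction of the direct image, whose ``singleton/completion'' formula is designed precisely to replace the wrong candidate $\gamma_{\ast}Y/\gamma_{\ast}R$. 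In the converse direction, restricting a topos-level adjunction to assemblies is also not automatic: it is Johnstone's Theorem~\ref{ptj} (the inverse image preserves assemblies, and the direct image does too since it preserves monos and carries $\nabla_B$ to $\nabla_A$); alternatively one can avoid restriction by covering $f_{\ast}(B,\{\cdot\})$ with an assembly and using that $1$ is regular projective before transposing points --- close to what you sketch, but your justification for speaking of a ``tracker'' of the counit presupposes the restriction you have not established.
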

Obvious drawbacks of this theorem are the logical complexity of the definition of `computationally dense' and the fact that, prima facie, the theorem only says something about geometric morphisms which are induced by a $\Gamma$-functor between categories of assemblies, in other words: geometric morphisms ${\sf RT}(B)\to {\sf RT}(A)$ for which the inverse image functor maps assemblies to assemblies. Both these issues were successfully addressed in Peter Johnstone's paper \cite{JohnstonePT:geomrt}:
\begin{theorem}[Johnstone]\label{cdptj} An applicative morphism $\gamma :A\to B$ is computationally dense if and only if there exist an element $r\in B$ and a function $g:B\to A$ satisfying: for all $b\in B$ and all $b'\in\gamma (g(b))$, $rb'=b$. \end{theorem}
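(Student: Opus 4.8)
The plan is to prove both implications by exhibiting the required data explicitly. The direction from computational density to Johnstone's condition is the easier one. Fixing the witness $m\in B$ from Definition~\ref{cddef}, I would, given $b\in B$, feed the density condition the constant map $\beta={\sf k}b\in B$, which satisfies $\beta x=b$ for every $x\in B$; hence $\beta\gamma(a')\dar$ and $\beta\gamma(a')=\{b\}$ for every $a'\in A$ (using only that each $\gamma(a')$ is nonempty). Density then yields an $a\in A$ with $aa'\dar$ and $m\gamma(aa')\subseteq\{b\}$ for all $a'$. Choosing any fixed $a'_0\in A$ and setting $g(b)=aa'_0$, I obtain $m\gamma(g(b))\subseteq\{b\}$, so $my=b$ for every $y\in\gamma(g(b))$; taking $r=m$ finishes this direction. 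The only thing to record is that $g$ so defined is a genuine set-theoretic function $B\to A$, which is unproblematic since for each $b$ we simply make one choice of $a$ and of $a'_0$.

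For the converse I would construct a single $m\in B$ that works uniformly and, for each $b\in B$, an appropriate $a\in A$, the idea being that $a$ pairs the fixed ``code'' $g(b)$ with its argument while $m$ does the decoding. Concretely, using a realizer $\rho$ for $\gamma$ together with fixed realizers $p_0\in\gamma(\pi_0)$ and $p_1\in\gamma(\pi_1)$ of the projection combinators of $A$, I set $P_iz=\rho\,p_i\,z$ in $B$. The realizer condition, applied to $\pi_0(\pi\,g(b)\,a')=g(b)$ and $\pi_1(\pi\,g(b)\,a')=a'$, guarantees that for \emph{every} $z\in\gamma(\pi\,g(b)\,a')$ one has $P_0z\in\gamma(g(b))$ and $P_1z\in\gamma(a')$ (with both defined); note this holds for all such $z$, not merely ``pair-shaped'' ones, since it is a statement about the elements $\pi\,g(b)\,a'$, $\pi_0$, $\pi_1$ and their realizers. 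I would then put $a=\langle x\rangle(\pi\,g(b)\,x)$, so that $aa'=\pi\,g(b)\,a'\dar$ for every $a'$, and $m=\langle z\rangle\big(r(P_0z)(P_1z)\big)$, which is independent of $b$ and $a'$.

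It remains to verify the density inequality. Fixing $b$ and an $a'$ with $b\gamma(a')\dar$, I take any $z\in\gamma(aa')=\gamma(\pi\,g(b)\,a')$: since $P_0z\in\gamma(g(b))$, Johnstone's hypothesis gives $r(P_0z)=b$, and since $P_1z\in\gamma(a')$ with $b\gamma(a')\dar$, the value $b(P_1z)$ is defined and lies in $b\gamma(a')$; hence $mz=r(P_0z)(P_1z)=b(P_1z)\in b\gamma(a')$, so $mz\dar$ and $m\gamma(aa')\subseteq b\gamma(a')$. The main obstacle, and the point where the hypotheses are genuinely used, is this converse direction: one cannot let $a$ ``compute'' the set-theoretic $g$, so the only way to carry $b$ into an $A$-side computation is to smuggle in $g(b)$ as a constant and defer the actual decoding to $m$ via $r$. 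Getting the definedness bookkeeping right — in particular that $b(P_1z)\dar$ uses \emph{precisely} the premise $b\gamma(a')\dar$, and that $m$ must be chosen before $b$ and $a'$ — is the crux of the argument.
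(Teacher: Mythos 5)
The paper itself contains no proof of this statement: Theorem~\ref{cdptj} is imported from Johnstone's paper \cite{JohnstonePT:geomrt}, so there is no internal argument to compare yours against; judged on its own terms, your proof is correct and complete. In the forward direction, instantiating Definition~\ref{cddef} at the total constant element ${\sf k}b$ is exactly the right move: $({\sf k}b)\gamma (a')\dar$ for every $a'$, so density yields an $a$ with $aa'\dar$ and $m\gamma (aa')\subseteq\{ b\}$ for \emph{all} $a'$, and freezing one $a'_0$ gives $g(b)=aa'_0$, $r=m$ (the appeal to choice in defining $g$ is harmless). In the converse, the two points that genuinely matter are the ones you flag: the witness $m=\langle z\rangle\bigl(r(P_0z)(P_1z)\bigr)$ must be, and is, chosen independently of $b$ and $a'$; and the definedness bookkeeping works because $(\langle z\rangle t)c\preceq t(c)$, so $mz$ is defined and equal to $r(P_0z)(P_1z)$ as soon as the right-hand side denotes, which it does since $r(P_0z)=b$ (as $P_0z\in\gamma (g(b))$, by Johnstone's hypothesis) and $b(P_1z)\dar$ (as $P_1z\in\gamma (a')$ and $b\gamma (a')\dar$ is the standing premise). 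Your use of the realizer $\rho$ of $\gamma$ applied to elements $p_i\in\gamma (\pi _i)$ to extract $P_0z\in\gamma (g(b))$ and $P_1z\in\gamma (a')$ from an \emph{arbitrary} $z\in\gamma (\pi\, g(b)\, a')$ is also sound, for precisely the reason you state: the realizer condition concerns the elements $\pi _i$ and $\pi\, g(b)\, a'$ of $A$ and all of their $\gamma$-values, with no assumption that $z$ is ``pair-shaped''. This is the natural equivalence argument, and modulo notation it is the same style of reasoning as Johnstone's original; nothing is missing.
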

We might, extending the notation for inequalities between applicative morphisms, express the last property as: $\gamma g\leq {\rm id}_B$.
\begin{theorem}[Johnstone]\label{ptj}\begin{rlist}\item For any geometric morphism $f:{\sf RT}(B)\to {\sf RT}(A)$, the diagrams
$$\diagram {\rm Set}\dto\rto^{\rm id} & {\rm Set}\dto \\ {{\sf RT}(B)}\rto_f & {{\sf RT}(A)}\enddiagram$$
(where the vertical arrows embed Set as the category of $\neg\neg$-sheaves) is a bipullback in the 2-category of toposes and geometric morphisms.
\item For every geometric morphism $f:{\sf RT}(B)\to {\sf RT}(A)$, the inverse image functor $f^{\ast}$ preserves assemblies.\end{rlist}\end{theorem}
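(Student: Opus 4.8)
The plan is to reduce both parts to a single statement about how $f^{\ast}$ interacts with the double-negation topology, and then to isolate the one genuinely non-formal ingredient. Throughout I use the standard intrinsic description of a realizability topos (see Section~\ref{asssection} and \cite{OostenJ:reaics}): in ${\sf RT}(A)$ the $\neg\neg$-sheaves are exactly the \emph{codiscrete} objects $\nabla_A S$, they form a subtopos equivalent to ${\rm Set}$, and the inclusion $i_A\colon{\rm Set}\to{\sf RT}(A)$ has $i_{A\ast}=\nabla_A$ and $i_A^{\ast}=\Gamma_A$; moreover the assemblies are precisely the $\neg\neg$-\emph{separated} objects, equivalently the subobjects of codiscrete objects. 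I also record the facts that come for free from the essential uniqueness of geometric morphisms to ${\rm Set}$: writing $\pi_A\colon{\sf RT}(A)\to{\rm Set}$ for the unique such morphism (so $\pi_A^{\ast}=\Delta_A\dashv\Gamma_A\dashv\nabla_A$, with $i_A$ a section of $\pi_A$), the identity $\pi_A f\cong\pi_B$ gives $f^{\ast}\Delta_A\cong\Delta_B$ on inverse images.

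For part (i) I would invoke the standard theorem that a subtopos pulls back along a geometric morphism to the subtopos of the pulled-back local operator $f^{\#}(\cdot)$. The canonical pullback of $i_A$ along $f$ is thus the inclusion of ${\rm sh}_{f^{\#}(\neg\neg_A)}$, so the square is a bipullback \emph{exactly} when $f^{\#}(\neg\neg_A)=\neg\neg_B$; the top edge is then forced to be (isomorphic to) the identity, since every geometric endomorphism of ${\rm Set}$ is. One inequality is immediate and formal: because $f^{\ast}$ preserves the initial object, the pullback of a dense subtopos is dense, whence $f^{\#}(\neg\neg_A)\le\neg\neg_B$, i.e.\ the pulled-back subtopos \emph{contains} the copy of ${\rm Set}$. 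So part (i) comes down entirely to the reverse inequality $f^{\#}(\neg\neg_A)\ge\neg\neg_B$, that is, to showing that pulling back the double-negation topology creates no sheaves beyond the codiscrete ones.

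Granting this crux, part (ii) is quick. Since the square is then a bipullback whose vertical legs are inclusions, it satisfies Beck--Chevalley, giving $f^{\ast}\nabla_A S\cong\nabla_B S$ naturally in $S$. An assembly $X$ of ${\sf RT}(A)$ embeds as $X\rightarrowtail\nabla_A\Gamma_A X$; applying the mono-preserving functor $f^{\ast}$ yields a mono $f^{\ast}X\rightarrowtail f^{\ast}\nabla_A\Gamma_A X\cong\nabla_B\Gamma_A X$, so $f^{\ast}X$ is a subobject of a codiscrete object and hence an assembly. (Equivalently, (ii) follows already from the closed-mono half of the crux: if $f^{\ast}$ preserves $\neg\neg$-closed monomorphisms, it preserves $\neg\neg$-separatedness of diagonals.)

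The main obstacle is precisely the inequality $f^{\#}(\neg\neg_A)\ge\neg\neg_B$, and I expect no formal argument to suffice here: inverse image functors preserve $\wedge$, $\vee$ and $0$ but not Heyting implication, hence not $\neg$, so preservation of $\neg\neg$-density or $\neg\neg$-closedness is not automatic, and in particular one cannot argue by transporting disjoint subobjects, since ${\sf RT}(B)$ may carry far more subobjects of $f^{\ast}V$ than those of the form $f^{\ast}U'$. The strategy I would pursue is to exploit realizability-specific structure: every object of ${\sf RT}(B)$ is covered by a projective partitioned assembly, $\neg\neg$-density and $\neg\neg$-closedness can be tested against these projective generators, and $f^{\ast}\Delta_A\cong\Delta_B$ lets one reach the discrete generators of ${\sf RT}(B)$ as $f^{\ast}$-images. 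The delicate point, which is where the real work lies, is to show that the $f^{\ast}$-images of $\neg\neg_A$-dense monomorphisms already generate the whole $\neg\neg_B$-topology, so that every $f^{\#}(\neg\neg_A)$-sheaf is in fact codiscrete; extracting this cofinality from the tracking/projectivity structure, rather than the surrounding $2$-categorical bookkeeping, is the heart of the proof.
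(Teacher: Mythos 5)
First, a point of orientation: the paper itself contains no proof of this theorem; it is imported as background from Johnstone's paper \cite{JohnstonePT:geomrt}, so your proposal has to stand on its own merits. Your reduction of part (i) is fine: pullbacks of subtopos inclusions along geometric morphisms exist and are computed by the pulled-back local operator, so the square is a bipullback precisely when $f^{\#}(\neg\neg_A)=\neg\neg_B$, and the top edge is then forced. But the half you dismiss as ``immediate and formal'' is false as a general principle: dense subtoposes are \emph{not} stable under pullback. Let $\mathcal{E}$ be the topos of sheaves on the Sierpi\'nski space; its smallest dense subtopos is the open point ${\rm Set}\to\mathcal{E}$, and the pullback of that subtopos along the \emph{closed} point ${\rm Set}\to\mathcal{E}$ is the degenerate subtopos of ${\rm Set}$, which is not dense. (Preservation of $0$ cannot be the relevant point, since every inverse image functor preserves $0$, including the one in this counterexample; $\neg\neg$-density of a mono is defined through $\neg$, which inverse images do not preserve --- exactly the obstruction you yourself raise for the other inequality.) So the inequality $f^{\#}(\neg\neg_A)\le\neg\neg_B$ --- equivalently the mere commutativity of the square, i.e.\ what Johnstone calls skeletality of $f$ --- is itself nontrivial, realizability-specific content of the theorem. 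Since you leave the reverse inequality as an admitted gap, neither half of part (i) is actually established.

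Second, two of the ``facts'' you record at the outset are not facts, and your sketch leans on them. There is no geometric morphism $\pi_A:{\sf RT}(A)\to{\rm Set}$ for nontrivial $A$: $\Gamma_A$ has no left adjoint, because every object $X$ of ${\sf RT}(A)$ satisfies $|{\rm Hom}(X,N)|\le|A|$ (where $N$ is the natural numbers object: $X$ is covered by an assembly, and a morphism from an assembly to $N$ is determined by any one of its trackers), whereas $\Delta_A\dashv\Gamma_A$ would force ${\rm Hom}(\Delta_A(S),N)\cong{\mathbb N}^S$, of cardinality exceeding $|A|$ when $|S|=|A|$. Hence the isomorphism $f^{\ast}\Delta_A\cong\Delta_B$, the main lever in your proposed attack on the crux, is not available. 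Likewise, Beck--Chevalley is not automatic for bipullbacks of toposes with inclusion legs: in the same Sierpi\'nski example, the direct image of the open point followed by the inverse image of the closed point is isomorphic to the identity of ${\rm Set}$, not to the constant functor that Beck--Chevalley for the (degenerate) pullback would demand; so your derivation of (ii) from (i) is also unjustified as stated. The true statements $f^{\ast}\nabla_A\cong\nabla_B$ and ``$f^{\ast}$ preserves assemblies'' are outputs of the realizability-specific analysis in \cite{JohnstonePT:geomrt} (used as a black box in this paper, e.g.\ in Corollaries~\ref{cdcharthm} and \ref{cdcharcor}), not consequences of $2$-categorical bookkeeping; stripped of the invalid formal steps, your proposal amounts to a correct identification of \emph{one} of the two things that must be proved, together with a misclassification of the other as trivial.
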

We shall be saying more about this theorem in section \ref{geommorsection}. For the moment, we continue out treatment of material from the literature, inasmuch it is relevant for our purposes. 
\begin{definition}\label{reggmdef}\em A geometric morphism is called {\em regular\/} if its direct image functor is a regular functor.\end{definition}
Clearly, by theorems \ref{longleythm} and \ref{ptj}, a regular geometric morphism ${\sf RT}(B)\to {\sf RT}(A)$ arises from an adjunction in the 2-category of pcas; and therefore Longley studied such adjunctions in his thesis. First, he distinguished a number of types of applicative morphisms:
\begin{definition}[Longley]\label{amtypes}\em Let $\gamma :A\to B$ be an applicative morphism.\begin{rlist}
\item $\gamma$ is called {\em decidable\/} if there is an element $d\in B$ such that for all $b\in\gamma ({\sf T}_A)$, $db={\sf T}_B$, and for all $b\in {\sf F}_A$, $db={\sf F}_B$.
\item $\gamma$ is called {\em discrete\/} if $\gamma (a)\cap\gamma (a')=\emptyset$ whenever $a\neq a'$.
\item $\gamma$ is called {\em projective\/} if $\gamma$ is isomorphic to an applicative morphism which is single-valued.\end{rlist}\end{definition}
Among other things, Longley proved the statements in the following theorem:
\begin{theorem}[Longley]\label{amtypesthm} Let $\diagram A\rto<-.5ex>_{\gamma} & B\lto<-.5ex>_{\delta}\enddiagram$ be a pair of applicative morphisms.\begin{rlist}
\item If $\gamma\delta\leq {\rm id}_B$ then $\gamma$ is decidable and $\delta $ is discrete.
\item If $\gamma\dashv\delta$ then $\gamma$ is projective.
\item If $\gamma\dashv\delta$ and $\delta\gamma\simeq {\rm id}_A$ then both $\delta$ and $\gamma$ are discrete and decidable.
\item $\gamma$ is decidable if and only if $\gamma ^{\ast}$ preserves finite sums, if and only if $\gamma ^{\ast}$ preserves the natural numbers object.
\item $\gamma$ is projective if and only if $\gamma ^{\ast}$ preserves regular projective objects.
\item $\gamma$ is discrete if and only if $\gamma ^{\ast}$ preserves discrete objects.
\item There exists, up to isomorphism, exactly one decidable applicative morphism ${\cal K}_1\to A$, for any pca $A$.\end{rlist}\end{theorem}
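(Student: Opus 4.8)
The plan is to treat the seven clauses in three groups: the \emph{combinatorial} clauses (i)--(iii) about adjoint pairs, the \emph{semantic} characterisations (iv)--(vi) relating a property of $\gamma$ to preservation of a class of objects by $\gamma^{\ast}$, and the uniqueness statement (vii). Two clauses reduce to others: (iii) will follow from (i) applied twice, and (ii) from (v) together with the $2$-functoriality of $\gamma\mapsto\gamma^{\ast}$ recorded in Theorem~\ref{longleythm}. Throughout I will use the realizers $r_\gamma\in B$, $r_\delta\in A$ of the morphisms, the combinators and Curry numerals of the pcas, and the fact that an inequality $\gamma\leq\delta$ is witnessed by a single element of the codomain.

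For (i), unfold $\gamma\delta\leq{\rm id}_B$: there is $s\in B$ with $sc=b$ whenever $a\in\delta(b)$ and $c\in\gamma(a)$. Discreteness of $\delta$ is then immediate, since $a\in\delta(b)\cap\delta(b')$ and $c\in\gamma(a)$ force $b=sc=b'$. For decidability of $\gamma$ I manufacture inside $A$ the definition-by-cases element $h$ with $h{\sf T}_A=t_0$, $h{\sf F}_A=t_1$ for fixed $t_0\in\delta({\sf T}_B)$, $t_1\in\delta({\sf F}_B)$; transporting along $\gamma$ (via $r_\gamma$ and a fixed $g\in\gamma(h)$) and applying $s$ yields the decider $d=\langle x\rangle\, s(r_\gamma g x)$, because $\gamma(t_0)\subseteq\gamma\delta({\sf T}_B)$ and $\gamma(t_1)\subseteq\gamma\delta({\sf F}_B)$. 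Clause (iii) then needs no new work: $\gamma\dashv\delta$ gives $\gamma\delta\leq{\rm id}_B$ while $\delta\gamma\simeq{\rm id}_A$ gives $\delta\gamma\leq{\rm id}_A$, so applying (i) to each composite (swapping the roles of $\gamma$ and $\delta$ the second time) makes both $\gamma$ and $\delta$ decidable and discrete.

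I prove (v) and (vi) by testing $\gamma^{\ast}$ on the single generic object $\mathsf{A}=(A,\,a\mapsto\{a\})$, which is at once partitioned and modest. Recall the regular projectives of ${\rm Ass}(A)$ are the partitioned assemblies (singleton realizer sets) and the discrete objects are those isomorphic to a modest assembly (distinct points get disjoint realizer sets). The forward directions are clean: $\gamma^{\ast}$ keeps a partitioned object partitioned when $\gamma$ is single-valued, and a modest object modest when $\gamma$ is discrete, and $\gamma^{\ast}$ is stable up to iso under $\simeq$. For the converses, $\gamma^{\ast}(\mathsf{A})=(A,\,a\mapsto\gamma(a))$; if this is projective, the witnessing iso to a partitioned assembly gives $p,q\in B$ exhibiting $\gamma\simeq\gamma'$ for the single-valued $\gamma'(a):=p\,\gamma(a)$, so $\gamma$ is projective; if it is discrete, then from $c\in\gamma(a)\cap\gamma(a')$ and the forward tracker of an iso to a modest assembly one would get a common realizer of two distinct points, a contradiction, so $\gamma$ is discrete. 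Finally (ii): the adjunction $\gamma\dashv\delta$ lifts to $\gamma^{\ast}\dashv\delta^{\ast}$; as $\delta^{\ast}$ is a regular functor it preserves regular epis, so its left adjoint $\gamma^{\ast}$ preserves regular projectives, whence $\gamma$ is projective by (v).

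For (iv) I run a cycle of four implications. A decider $d$ gives preservation of finite coproducts: $\gamma^{\ast}$ leaves the underlying disjoint union unchanged and $d$ reads off, from a realizer in the image, the ${\sf T}_B/{\sf F}_B$ tag naming the summand, tracking the inverse comparison map; conversely preservation of $1+1$, presented up to iso as $(\{0,1\},0\mapsto\{{\sf T}_A\},1\mapsto\{{\sf F}_A\})$, delivers a decider outright. For the natural numbers object $N$, $\gamma^{\ast}(N)=(\mathbb N,\,n\mapsto\gamma(\overline n_A))$; decidability supplies the ``zero-test and recurse on predecessor'' procedure recovering $\overline n_B$ from any realizer of $\gamma(\overline n_A)$, giving $\gamma^{\ast}(N)\cong N_B$, while conversely such an iso lets one decide Booleans by converting ${\sf T}_A,{\sf F}_A$ to $\overline 0_A,\overline 1_A$ (a map realized in $A$, transported by $\gamma$) and zero-testing in $B$. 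For (vii) set $\iota(n)=\{\overline n_A\}$: this is an applicative morphism since Kleene application is a binary partial computable function, hence representable in $A$ (Section~\ref{pcasection}), and it is decidable because the two fixed Boolean numerals are separated by a represented characteristic function. Given any decidable $\gamma:{\cal K}_1\to A$, one gets $\iota\leq\gamma$ by building a realizer of $\gamma(n)$ from one of $\gamma(0)$, iterating $r_\gamma$ against a fixed realizer of the successor $n$ times (a recursion on $\overline n_A$, valid for every applicative morphism), and $\gamma\leq\iota$ by the same decidability-driven recovery that rebuilds $\overline n_A$ from a realizer of $\gamma(n)$; together these give $\gamma\simeq\iota$. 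The main obstacle is precisely these two ``counting'' recursions --- recovering $\overline n_B$ in the $N$-half of (iv) and $\overline n_A$ in the hard half of (vii): each is a genuine unbounded loop (``while nonzero, take predecessor'') that must be implemented with the fixed-point combinator of the pca, with decidability supplying the zero-test at each stage and a separate argument checking that the loop halts after exactly $n$ steps with the intended numeral; the remaining verifications are routine realizer bookkeeping.
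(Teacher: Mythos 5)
This theorem is stated in the paper purely as background, with attribution to Longley and no proof given --- the reader is implicitly referred to \cite{LongleyJ:reatls} (or \cite{OostenJ:reaics}) --- so there is no in-paper argument to compare against. Your blind reconstruction is correct and follows the standard route of those sources: direct realizer manipulation for (i), clause (iii) as two applications of (i), testing $\gamma^{\ast}$ on the generic assembly $(A,\,a\mapsto\{a\})$ for the converse directions of (v) and (vi), clause (ii) via the lifted adjunction $\gamma^{\ast}\dashv\delta^{\ast}$ (whose triangle identities are automatic because 2-cells between such functors are unique, by Theorem~\ref{longleythm}), and the fixed-point-combinator ``numeral recovery'' loops for (iv) and (vii), which you rightly single out as the only genuinely delicate constructions and for which your halting argument (induction on $n$) is the right one.
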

From theorem~\ref{amtypesthm} and theorem~\ref{ptj} we can draw some immediate inferences:
\begin{corollary}\label{amtypescor} Let $\gamma :A\to B$ be an applicative morphism.\begin{rlist}
\item If $\gamma$ is computationally dense, then $\gamma$ is decidable.
\item If $\gamma$ is computationally dense and the geometric morphism ${\sf RT}(B)\to {\sf RT}(A)$ induced by $\gamma$ is regular, then $\gamma$ is projective.
\item There exists, up to isomorphism, at most one geometric morphism ${\sf RT}(A)\to\Eff$; and there is one if and only if the essentially unique decidable morphism from ${\cal K}_1$ to $A$ is computationally dense.\end{rlist}\end{corollary}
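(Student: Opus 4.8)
The plan is to derive all three statements from the machinery already in place, chiefly Theorem~\ref{cdgmthm}, Longley's classification (Theorems~\ref{longleythm} and~\ref{amtypesthm}), and Johnstone's Theorem~\ref{ptj}; no new construction should be needed. For (i), since $\gamma$ is computationally dense, Theorem~\ref{cdgmthm} gives a geometric morphism ${\sf RT}(B)\to{\sf RT}(A)$ whose inverse image functor is $\gamma^{\ast}:{\sf RT}(A)\to{\sf RT}(B)$. Being an inverse image, $\gamma^{\ast}$ is a left adjoint and hence preserves all colimits, in particular finite sums. By the equivalence in Theorem~\ref{amtypesthm}(iv), preservation of finite sums by $\gamma^{\ast}$ is precisely decidability of $\gamma$, so $\gamma$ is decidable. (One could equally invoke preservation of the natural numbers object.)

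For (ii) the extra hypothesis is that the induced geometric morphism ${\sf RT}(B)\to{\sf RT}(A)$ is regular, i.e.\ its direct image is a regular functor. By the remark following Definition~\ref{reggmdef} (resting on Theorems~\ref{longleythm} and~\ref{ptj}), such a geometric morphism arises from an adjunction $\gamma\dashv\delta$ in the 2-category of pcas, with our $\gamma$ on the inverse-image (left-adjoint) side. Theorem~\ref{amtypesthm}(ii) then delivers projectivity of $\gamma$ at once. The only point to verify is that the $\gamma$ produced by this adjunction is, up to isomorphism, the original one; this follows from the faithfulness clause of Theorem~\ref{longleythm}, since both induce (up to isomorphism) the same inverse image functor, and projectivity is invariant under isomorphism of applicative morphisms.

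For (iii), recall $\Eff={\sf RT}({\cal K}_1)$. By Theorem~\ref{amtypesthm}(vii) there is, up to isomorphism, a unique decidable applicative morphism $\gamma_0:{\cal K}_1\to A$. If $\gamma_0$ is computationally dense then Theorem~\ref{cdgmthm} yields the desired geometric morphism ${\sf RT}(A)\to\Eff$; this is the easy direction. Conversely, given any geometric morphism $f:{\sf RT}(A)\to\Eff$, Theorem~\ref{ptj} shows that $f^{\ast}$ preserves assemblies and commutes with the global sections functors, so $f^{\ast}$ restricts to a regular $\Gamma$-functor ${\rm Ass}({\cal K}_1)\to{\rm Ass}(A)$, which by Theorem~\ref{longleythm} is $\gamma^{\ast}$ for some applicative morphism $\gamma:{\cal K}_1\to A$. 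Since $\gamma^{\ast}=f^{\ast}$ has a right adjoint, $\gamma$ is computationally dense by Theorem~\ref{cdgmthm}, and by part (i) it is decidable; hence $\gamma$ is isomorphic to $\gamma_0$, and computational density transfers along this isomorphism, giving the stated equivalence. The ``at most one'' assertion comes from the same observation: every geometric morphism ${\sf RT}(A)\to\Eff$ has inverse image isomorphic to $\gamma_0^{\ast}$, and by the faithfulness clause of Theorem~\ref{longleythm} its geometric morphism is thereby determined up to isomorphism.

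The main obstacle is the uniqueness half of (iii): one must be confident that passing from a geometric morphism to an applicative morphism and back is tight enough to pin the morphism down up to isomorphism. Two ingredients make this work: Theorem~\ref{ptj}, which guarantees the inverse image lands among assemblies so that Longley's classification applies at all, and the faithfulness clause of Theorem~\ref{longleythm}, which guarantees that isomorphic applicative morphisms yield isomorphic inverse image functors and hence isomorphic geometric morphisms. Part (i) is exactly what forces the inducing morphism to be decidable, so that the uniqueness in Theorem~\ref{amtypesthm}(vii) can collapse all candidates to the single morphism $\gamma_0$.
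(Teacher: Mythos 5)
Your proposal is correct and follows exactly the route the paper intends: the corollary is stated as an ``immediate inference'' from Theorems~\ref{amtypesthm} and~\ref{ptj} (together with Theorems~\ref{cdgmthm} and~\ref{longleythm} and the remark after Definition~\ref{reggmdef}), and your argument simply spells out those inferences. The details you supply -- preservation of finite sums for (i), the adjunction-in-PCA remark for (ii), and the passage geometric morphism $\to$ regular $\Gamma$-functor $\to$ applicative morphism, collapsed to $\gamma_0$ via decidability, for (iii) -- are precisely the steps the paper leaves implicit.
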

We shall give an example where ii) fails, so not every geometric morphism is given by an adjunction on the level of pcas; and we shall give a criterion for iii) to hold, in terms of local operators on realizability toposes (theorem~\ref{dmtheorem}).

Let us draw one more corollary from theorem~\ref{amtypesthm}:
\begin{corollary}\label{regprojcor}Let $\gamma$ be computationally dense. Then the geometric morphism induced by $\gamma$ is regular, if and only if $\gamma$ has a right adjoint in {\rm PCA}, if and only if $\gamma$ is projective.\end{corollary}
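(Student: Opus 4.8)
The plan is to prove the three conditions equivalent by running the cycle (1)$\Rightarrow$(2)$\Rightarrow$(3)$\Rightarrow$(1), where (1) says the induced geometric morphism $f:{\sf RT}(B)\to {\sf RT}(A)$ is regular, (2) says $\gamma$ has a right adjoint $\delta$ in {\rm PCA}, and (3) says $\gamma$ is projective. Since $\gamma$ is assumed computationally dense, such an $f$ exists by Theorem~\ref{cdgmthm}; write $f^{\ast}=\gamma^{\ast}$ for its inverse image and $f_{\ast}$ for its direct image. The implication (2)$\Rightarrow$(3) is then immediate: a right adjoint in {\rm PCA} makes $\gamma$ projective by Theorem~\ref{amtypesthm}(ii).

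For (3)$\Rightarrow$(1) I would argue by an adjoint-lifting argument using that ${\sf RT}(A)$ has enough regular projectives, namely the assemblies (objects of ${\rm Ass}(A)$), which generate ${\sf RT}(A)$ under regular epis. Projectivity of $\gamma$ gives, via Theorem~\ref{amtypesthm}(v), that $f^{\ast}=\gamma^{\ast}$ preserves regular projective objects. To see that $f_{\ast}$ preserves regular epis, take a regular epi $e:Y\to Z$ in ${\sf RT}(B)$, a regular projective $P$ in ${\sf RT}(A)$, and a map $v:P\to f_{\ast}Z$; transposing across $f^{\ast}\dashv f_{\ast}$ gives $\hat v:\gamma^{\ast}P\to Z$, and since $\gamma^{\ast}P$ is regular projective it lifts through $e$, say $\hat v=e\circ w$. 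Transposing $w$ back produces a lift $P\to f_{\ast}Y$ of $v$ through $f_{\ast}(e)$. As every map from a regular projective into $f_{\ast}Z$ lifts, $f_{\ast}(e)$ is a regular epi, so $f_{\ast}$ is regular and (1) holds.

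For (1)$\Rightarrow$(2) the first point is that $f_{\ast}$ always carries assemblies to assemblies. By Theorem~\ref{ptj}(i) the square over {\rm Set} is a bipullback, so the geometric morphism to {\rm Set} factors through $f$; comparing direct images yields $f_{\ast}\nabla_B\cong\nabla_A$ and $\Gamma_A\circ f_{\ast}\cong\Gamma_B$. Since $f_{\ast}$ is left exact and every assembly $X$ embeds as a subobject into the $\neg\neg$-sheaf $\nabla_B\Gamma_B X$, its image $f_{\ast}X$ embeds into $\nabla_A\Gamma_B X$ and is therefore again an assembly. If moreover the morphism is regular, then $f_{\ast}$ restricts to a regular, $\Gamma$-preserving functor ${\rm Ass}(B)\to {\rm Ass}(A)$, so by Theorem~\ref{longleythm} it is isomorphic to $\delta^{\ast}$ for some applicative morphism $\delta:B\to A$. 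Finally the adjunction $f^{\ast}\dashv f_{\ast}$, i.e.\ $\gamma^{\ast}\dashv\delta^{\ast}$, must be pushed back to {\rm PCA}: using pseudofunctoriality of $(-)^{\ast}$ and the order-reflection clause of Theorem~\ref{longleythm}, the unit and counit translate into $\id_A\leq\delta\gamma$ and $\gamma\delta\leq\id_B$, which in the preorder-enriched setting is exactly $\gamma\dashv\delta$, giving (2).

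I expect the main obstacle to be the implication (1)$\Rightarrow$(2). The two delicate points there are that the regular direct image genuinely lands among assemblies, where Theorem~\ref{ptj}(i) (to get $f_{\ast}\nabla_B\cong\nabla_A$) does the essential work, and that the purely categorical adjunction $\gamma^{\ast}\dashv\delta^{\ast}$ can be reflected back to an honest adjunction of applicative morphisms rather than merely of functors; the latter relies on the order-enrichment part of Longley's theorem, so I would take care to invoke Theorem~\ref{longleythm} in both its existence and its uniqueness/monotonicity form.
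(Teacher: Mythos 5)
Your proof is correct, and its ingredients are the same as the paper's: Theorem~\ref{amtypesthm}(ii)/(v), Longley's biequivalence (Theorem~\ref{longleythm}), Theorem~\ref{ptj}, and the standard lifting argument through regular projectives. The organization differs only mildly: the paper proves the two equivalences separately --- regularity $\Leftrightarrow$ adjunction in PCA is read off directly from the biequivalence, and adjunction $\Leftrightarrow$ projectivity is done by the lifting argument entirely at the level of the assembly categories --- whereas you run a cycle, carrying out the lifting argument at the level of the toposes in (3)$\Rightarrow$(1) and the biequivalence argument in (1)$\Rightarrow$(2); both work, and your version of (1)$\Rightarrow$(2) usefully makes explicit why $f_{\ast}$ preserves assemblies (via $f_{\ast}\nabla_B\cong\nabla_A$ and left exactness of $f_{\ast}$), a point the paper delegates to Theorem~\ref{ptj}. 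One misstatement should be repaired: the regular projective objects of ${\sf RT}(A)$ are \emph{not} the assemblies, but (up to isomorphism) the \emph{partitioned} assemblies, i.e.\ those $(X,E)$ in which every $E(x)$ is a singleton; a general assembly need not be regular projective. Your argument survives unchanged, because it only uses that ${\sf RT}(A)$ has \emph{enough} regular projectives, which is true since every object is covered by an assembly and every assembly is covered by a partitioned one.
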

\begin{proof} The first equivalence was already stated after definition~\ref{reggmdef}, and is a direct consequence of the biequivalence expressed by theorem~\ref{longleythm}. For the second equivalence, if $\gamma\dashv\delta$ then $\gamma$ is projective by \ref{amtypesthm}ii); conversely, if $\gamma$ is projective then by \ref{amtypesthm}v), the functor $\gamma ^{\ast}$ preserves regular projective objects, which, given that categories of assemblies always have enough regular projectives, is the case if and only if the right adjoint of $\gamma ^{\ast}$ preserves regular epimorphisms, and is therefore induced by some applicative morphism $\delta$, which by \ref{longleythm} must be right adjoint to $\gamma$ in PCA.\end{proof}
\subsection{Order-pcas}\label{opcasection}
Although most of our results are about ordinary pcas, the generalization to order-pcas, first defined in \cite{OostenJ:extrea} and elaborated on in \cite{HofstraP:ordpca}, has its advantages for the formulation of some results.
\begin{definition}\label{opcadef}\em An {\em order-pca\/} is a partially ordered set $A$ with a partial binary application function $(a,b)\mapsto ab$; there are also elements {\sf k} and {\sf s}, and the axioms are:\begin{rlist}
\item If $ab\dar$, $a'\leq a$ and $b'\leq b$ then $a'b'\dar$ and $a'b'\leq ab$
\item ${\sf k}ab\leq a$
\item ${\sf s}ab\dar$ and whenever $ac(bc)\dar$, ${\sf s}abc\dar$ and ${\sf s}abc\leq ac(bc)$\end{rlist}\end{definition}
\begin{definition}\label{opcaam}\em An {\em applicative morphism\/} of order-pcas $A\to B$ is a function $f:A\to B$ satisfying the following requirements:\begin{rlist}
\item There is an element $r\in B$ such that whenever $aa'\dar$ in $A$, $rf(a)f(a')\dar$ in $B$, and $rf(a)f(a')\leq f(aa')$.
\item There is an element $u\in B$ such that whenever $a\leq a'$ in $A$, $uf(a)\dar$ and $uf(a)\leq f(a')$ in $B$.\end{rlist}\end{definition}
Just as for pcas, we have an order on applicative morphisms, which is analogously defined.

Every order-pca $A$ determines a category of assemblies: objects are pairs $(X,E)$ where $X$ is a set and $E(x)$ is a nonempty, downward closed subset of $A$, for each $x\in X$; morphisms are set-theoretic functions which are tracked just as in the definition for pcas.

On the 2-category of order-pcas there is a 2-monad $T$, which at the same time gives the prime examples of interest of genuine order-pcas: $T(A)$ is the order-pca consisting of nonempty, downward closed subsets of $A$, with the inclusion ordering; for $\alpha ,\beta\in T(A)$, we say $\alpha\beta\dar$ if and only if for all $a\in\alpha$ and $b\in\beta$, $ab\dar$ in $A$; if that holds, $\alpha\beta$ is the downward closure of the set $\{ ab\, |\, a\in\alpha ,b\in\beta\}$.

Note that when we consider applicative morphisms $f$ to order-pcas of the form $T(A)$, we may assume that $f$ is an order-preserving function; since the element $u$ of \ref{opcaam}ii) allows us to find an isomorphism between $f$ and the map $x\mapsto\bigcup_{y\leq x}f(y)$.

The category of assemblies on the order-pca $T(A)$ has enough regular projectives: a $T(A)$-assembly $(X,E)$ is regular projective if and only if (up to isomorphism) $E(x)$ is a {\em principal downset\/} of $T(A)$ for each $x$; i.e., $E(x)=\{\alpha\subseteq A\, |\,\alpha\subseteq\beta\}$ for some $\beta\in T(A)$. It is now easy to see that the full subcategory of ${\rm Ass}(T(A))$ on the regular projectives is equivalent to ${\rm Ass}(A)$, and applying a criterion due to Carboni (\cite{CarboniA:somfcr}), one readily verifies
\begin{theorem}\label{TAregcomplthm}The category of assemblies on $T(A)$ is the regular completion of the category ${\rm Ass}(A)$.\end{theorem}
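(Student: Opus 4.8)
The plan is to invoke the characterization of regular completions due to Carboni (\cite{CarboniA:somfcr}), in the form appropriate to genuine finite-limit (as opposed to merely weak-finite-limit) categories: a category is the regular completion of a category with finite limits precisely when it is regular, has enough regular projectives, and the full subcategory of regular projectives is closed under finite limits; in that case the finite-limit category being completed is, up to equivalence, that subcategory of projectives. The discussion preceding the theorem has already supplied two of the three ingredients, so the proof reduces to assembling them and checking the one remaining closure condition.

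First I would record that ${\rm Ass}(T(A))$ is regular; this holds for the category of assemblies on any order-pca, exactly as in the case of ordinary pcas (indeed it is a quasitopos), so nothing specific to $T(A)$ is needed here. Next, the paragraph just before the theorem already establishes that ${\rm Ass}(T(A))$ has enough regular projectives, that these are up to isomorphism exactly the assemblies $(X,E)$ for which each $E(x)$ is a principal downset of $T(A)$, and that the full subcategory on such objects is equivalent to ${\rm Ass}(A)$. Thus the finite-limit category that Carboni's criterion produces may be taken to be ${\rm Ass}(A)$, provided the final hypothesis is verified.

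The main point, and the only genuine verification, is therefore that the principal-downset assemblies are closed under finite limits computed in ${\rm Ass}(T(A))$. The terminal object causes no difficulty: its existence predicate is the principal downset generated by the top element $A$ of $T(A)$. For binary products I would use that $T(A)$, being a pca, carries pairing combinators: the existence predicate of a product of two principal-downset assemblies is generated by the pair of the two generators, and hence is again principal. Equalizers, and so the remaining finite limits, present no obstacle, since an equalizer in ${\rm Ass}(T(A))$ restricts the underlying set while inheriting the ambient existence predicate, so a principal-downset predicate stays principal. Hence the subcategory of projectives is closed under finite limits, and the outstanding hypothesis requires only the bookkeeping just indicated.

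With all three hypotheses in hand I would conclude, by Carboni's criterion, that ${\rm Ass}(T(A))$ is the regular completion of its full subcategory of regular projectives, namely ${\rm Ass}(A)$. I expect the one place that genuinely demands care is the closure under products: one must confirm not merely that the product in ${\rm Ass}(T(A))$ is isomorphic to a principal-downset assembly, but that, under the identification of the projectives with ${\rm Ass}(A)$, it agrees with the product taken in ${\rm Ass}(A)$ — in other words, that this identification is an equivalence of finite-limit categories. This compatibility follows from the pairing combinators of $T(A)$ restricting to those of $A$, but it is the step worth stating explicitly, since it is exactly what upgrades the weak-finite-limit bookkeeping to the genuine finite-limit statement the theorem asserts.
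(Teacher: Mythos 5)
Your route is the same as the paper's (which itself only sketches the argument): identify the regular projectives of ${\rm Ass}(T(A))$ with the principal-downset assemblies, observe that they form a full subcategory equivalent to ${\rm Ass}(A)$, and appeal to Carboni's characterization of regular completions. The problem is that the criterion as you state it --- regular, enough regular projectives, projectives closed under finite limits --- is \emph{not} Carboni's criterion and is in fact false: those three conditions do not distinguish the regular completion from the exact completion. The missing hypothesis is that \emph{every object admits a monomorphism into a regular projective object}. A counterexample to your version lives inside this very paper's subject matter: ${\sf RT}(A)$ is regular (indeed exact), has enough regular projectives (the partitioned assemblies, i.e.\ those $(X,E)$ with each $E(x)$ a singleton), and these are closed under finite limits; yet ${\sf RT}(A)$ is the \emph{exact} completion of the partitioned assemblies, whereas their \emph{regular} completion is ${\rm Ass}(A)$, a strictly smaller category for nontrivial $A$. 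So an argument quoting the criterion in your form would equally ``prove'' that ${\sf RT}(A)\simeq{\rm Ass}(A)$, and hence proves nothing; this is a genuine gap, not a bookkeeping omission.

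Fortunately the missing condition is easy to verify here, precisely because $T(A)$ has a top element, namely $A$ itself. For any $T(A)$-assembly $(X,E)$, the identity function on $X$ is a monomorphism $(X,E)\to\nabla(X)$, where $\nabla(X)$ has constant existence predicate $T(A)$: it is tracked by (the downward closure of) $\{{\sf k}\}$, since $\{{\sf k}\}\alpha$ is defined for every $\alpha$. And $T(A)$ is exactly the principal downset generated by its top element $A$, so $\nabla(X)$ is regular projective. With this third condition added, Carboni's criterion does apply, and the remainder of your verification (regularity, enough projectives, closure of the principal-downset assemblies under finite limits, and the compatibility of that closure with the identification of the projectives with ${\rm Ass}(A)$, which you rightly flag) goes through as you describe.
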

\subsection{Relative recursion}\label{relrecsection}
We also need to recall a construction given in \cite{OostenJ:genfrr}. Given a pca $A$ and a partial function $f:A\to A$, we say that $f$ is {\em representable\/} w.r.t.\ an applicative morphism $\gamma :A\to B$, if there is an element $b\in B$ which satisfies: for each $a$ in the domain of $f$ and each $c\in\gamma (a)$, $bc\dar$ and $bc\in\gamma (f(a))$. We say that $f$ is {\em representable}, or {\em representable in $A$}, if $f$ is representable w.r.t.\ the identity morphism on $A$.

There is a pca $A[f]$ and a decidable applicative morphism $\iota _f:A\to A[f]$ such that $f$ is representable w.r.t.\ $\iota _f$ and $\iota _f$ is universal with this property: whenever $\gamma :A\to B$ is a decidable applicative morphism w.r.t.\ which $f$ is representable, then $\gamma$ factors uniquely through $\iota _f$.

It follows that this property determines $A[f]$ up to isomorphism, and hence, if $f$ is representable in $A$ then $A$ and $A[f]$ are isomorphic.

Moreover, the applicative morphism $\iota _f$ is computationally dense and induces an inclusion of toposes: ${\sf RT}(A[f])\to {\sf RT}(A)$. Moreover, $\iota _f$, being the identity function on the level of sets, is projective as applicative morphism. 
\section{Geometric morphisms between realizability toposes}\label{geommorsection}
We start by formulating a variation on Longley's theorem~\ref{longleythm}. Recall the definition of order-pcas and the monad $T$ from section~\ref{opcasection}. We wish to characterize finite limit-preserving $\Gamma$-functors between categories of assemblies.
\begin{definition}\label{protoam}\em Let $A,B$ be pcas. A {\em proto-applicative morphism\/} from $A$ to $B$ is an applicative morphism of order-pcas from $T(A)$ to $T(B)$.\end{definition}
\begin{theorem}\label{assflpthm} There is a biequivalence between the following two 2-categories:\begin{itemize}
\item[1] The category of pcas, proto-applicative morphisms and inequalities between them
\item[2] The category of categories of the form ${\rm Ass}(A)$ for a pca $A$, finite limit-preserving $\Gamma$-functors and natural transformations\end{itemize}\end{theorem}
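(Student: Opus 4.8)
The plan is to factor the desired biequivalence through the order-pca level, using Theorem~\ref{TAregcomplthm} to trade ``finite-limit preserving'' on ${\rm Ass}(A)$ for ``regular'' on ${\rm Ass}(T(A))$, and then to invoke the order-pca analogue of Longley's Theorem~\ref{longleythm}. By definition a proto-applicative morphism $A\to B$ is an applicative morphism of order-pcas $\gamma:T(A)\to T(B)$, and the order-pca version of Theorem~\ref{longleythm} (proved by the same argument as the pca case, cf.\ \cite{HofstraP:ordpca}) identifies such $\gamma$, and the inequalities between them, with the regular $\Gamma$-functors ${\rm Ass}(T(A))\to{\rm Ass}(T(B))$ and their natural transformations, via $\gamma\mapsto\gamma^{\ast}$. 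On objects there is nothing to do, since 2-category~2 consists by fiat of the categories ${\rm Ass}(A)$. So it suffices to produce, for each pair $A,B$, a composition-respecting equivalence between regular $\Gamma$-functors ${\rm Ass}(T(A))\to{\rm Ass}(T(B))$ and finite-limit preserving $\Gamma$-functors ${\rm Ass}(A)\to{\rm Ass}(B)$.

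First I would fix the embeddings $j_A:{\rm Ass}(A)\hookrightarrow{\rm Ass}(T(A))$ and $j_B:{\rm Ass}(B)\hookrightarrow{\rm Ass}(T(B))$ supplied by Theorem~\ref{TAregcomplthm}: each is full, faithful and finite-limit preserving, with essential image the regular projectives, and each commutes with global sections, so that $\Gamma_{T(A)}\circ j_A=\Gamma_A$ and likewise for $B$. The forward map sends a finite-limit preserving $\Gamma$-functor $F:{\rm Ass}(A)\to{\rm Ass}(B)$ to the essentially unique regular functor $\hat F:{\rm Ass}(T(A))\to{\rm Ass}(T(B))$ with $\hat F j_A\cong j_B F$, obtained by applying the universal property of the regular completion to the finite-limit preserving functor $j_B F$ into the regular category ${\rm Ass}(T(B))$. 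The $\Gamma$-compatibility of the embeddings makes $\hat F$ a $\Gamma$-functor, and the $2$-dimensional part of the universal property transports natural transformations. The backward map restricts a regular $\Gamma$-functor $G$ along $j_A$ and corestricts along $j_B$.

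The main obstacle is making the backward map well defined, i.e.\ showing that a regular $\Gamma$-functor $G:{\rm Ass}(T(A))\to{\rm Ass}(T(B))$ carries regular projectives to regular projectives, so that $G j_A$ factors through $j_B$. Here I would use the order-pca Longley theorem to write $G\cong\gamma^{\ast}$ for some order-preserving $\gamma:T(A)\to T(B)$ (legitimate by the remark following Definition~\ref{opcaam}), and then compute directly. A regular projective of ${\rm Ass}(T(A))$ is, up to isomorphism, an assembly $(X,E)$ with each $E(x)$ a principal downset $\downarrow\beta_x$ of $T(A)$; since $\gamma$ is order-preserving and $\beta_x\in E(x)$, the value $\gamma^{\ast}(X,E)(x)$, namely the downward closure of $\gamma(E(x))$, equals the principal downset $\downarrow\gamma(\beta_x)$. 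Thus $\gamma^{\ast}$ takes principal-downset assemblies to principal-downset assemblies, i.e.\ preserves regular projectives, and the restricted-corestricted functor genuinely lands in ${\rm Ass}(B)$.

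Finally I would verify that the two maps are mutually inverse up to isomorphism. One composite is immediate from $\hat F j_A\cong j_B F$, which gives $\hat F|_{{\rm Ass}(A)}\cong F$. For the other, $G$ is already a regular functor extending $G j_A$ along $j_A$, so by the uniqueness clause of the regular-completion universal property it is isomorphic to the regular extension of its own restriction. Combined with the order-pca Longley identification this yields the equivalence on $1$-cells; the inequality-to-natural-transformation clause follows by the same restriction/extension argument at the level of $2$-cells together with the corresponding clause of the order-pca Longley theorem, so the comparison is a local equivalence of hom-categories. Compatibility with identities and with composition of proto-applicative morphisms is routine from the functoriality of $(-)^{\ast}$ and of the regular extension, which completes the biequivalence.
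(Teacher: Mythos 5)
There is a genuine gap, and it is fatal. Your whole construction rests on the ``order-pca version of Theorem~\ref{longleythm}'', which you invoke but do not prove, and which is in fact \emph{false} in the form you need: it is not true that every regular $\Gamma$-functor ${\rm Ass}(T(A))\to{\rm Ass}(T(B))$ is isomorphic to $\gamma^{\ast}$ for a (single-valued) applicative morphism of order-pcas $\gamma:T(A)\to T(B)$, and consequently your resolution of what you rightly call ``the main obstacle'' (preservation of regular projectives) collapses. The reason the ``same argument as the pca case'' does not give your statement is that the generic object of ${\rm Ass}(T(A))$ has realizing sets which are \emph{downsets of} $T(A)$, so running Longley's argument at that level produces a function $T(A)\to T(T(B))$ with realizers (a multi-valued morphism), not a function $T(A)\to T(B)$; the collapse from $T(T(B))$ down to $T(B)$ is precisely the mathematical content of Theorem~\ref{assflpthm}, and it is available only for functors landing in ${\rm Ass}(B)$, where realizing sets are single elements of $T(B)$. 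This is exactly why the paper argues directly with the $A$-assembly $(T(A),i)$ inside ${\rm Ass}(A)$: its $F$-image immediately yields a single-valued $\tilde F:T(A)\to T(B)$. Your detour through ${\rm Ass}(T(A))$ loses this, and quoting the order-pca Longley theorem to recover it is circular.

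Here is a concrete counterexample, with $A=B={\cal K}_1$. Define $F:{\rm Ass}({\cal K}_1)\to{\rm Ass}(T({\cal K}_1))$ by $F(X,E)=(X,\;x\mapsto\{\alpha\in T({\cal K}_1)\;|\;\alpha\subseteq E(x),\ \alpha\ \mbox{finite}\})$. This is a $\Gamma$-functor preserving finite limits (equalizers and binary products are preserved on the nose, the terminal object up to isomorphism), so by the regular-completion correspondence---the one step you do use correctly---it extends to a regular $\Gamma$-functor $G:{\rm Ass}(T({\cal K}_1))\to{\rm Ass}(T({\cal K}_1))$ with $G\circ j\cong F$. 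But $F$ does not land in the regular projectives: take $X=\{x\subseteq\N\;|\;x\ \mbox{infinite}\}$ and $E(x)=x$, and suppose $t,t'\in T({\cal K}_1)$ tracked an isomorphism between $F(X,E)$ and an assembly whose realizing sets are principal downsets generated by $\delta(x)\in T({\cal K}_1)$. For each $m$, the set $t'(t\{m\})$ is finite, nonempty, depends only on $m$, and is contained in every infinite $x$ containing $m$; taking $x=\{m\}\cup\{n\;|\;n>\max(t'(t\{m\}))+m\}$ forces $t'(t\{m\})=\{m\}$. Then for any infinite $x$ the set $w=\bigcup_{m\in x}t\{m\}$ is a nonempty subset of $\delta(x)$, so $t'w$ would have to be a \emph{finite} subset of $x$; but $t'w=\bigcup_{m\in x}t'(t\{m\})=x$ is infinite, a contradiction. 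Hence $G$ is a regular $\Gamma$-functor between the $T$-level assembly categories which does not preserve regular projectives and is not of the form $\gamma^{\ast}$ for any applicative morphism $\gamma:T({\cal K}_1)\to T({\cal K}_1)$ (your computation that such $\gamma^{\ast}$ take principal-downset assemblies to principal-downset assemblies is correct, which is exactly why $G$ cannot have that form). So your backward map is undefined at $G$, both halves of your claimed local equivalence fail, and the proof cannot be repaired without supplying, in essence, the paper's own generic-object argument.
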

\begin{proof} Let $\gamma :T(A)\to T(B)$ be an applicative morphism, realized by $r\in B$. Define $\gamma ^{\ast}(X,E)=(X,\gamma\circ E)$. If $f:(X,E)\to (Y,E')$ is tracked by $t\in A$, then $$r\gamma (\{ t\} )\gamma (E(x))\subseteq\gamma (E'(f(x)))$$
so whenever $s\in\gamma (\{ t\} )$, $rs$ tracks $f$ as morphism $(X,\gamma\circ E)\to (Y,\gamma\circ E')$. So $\gamma ^{\ast}$ is a $\Gamma$-functor.

It is immediate that $\gamma ^{\ast}$ preserves terminal objects and equalizers; that $\gamma ^{\ast}$ preserves finite products is similar to the proof of theorem~\ref{longleythm} (for which the reader may consult either \cite{LongleyJ:reatls} or  \cite{OostenJ:reaics}.

If $\gamma\leq\delta :T(A)\to T(B)$ is realized by $\beta\in T(B)$ and $b\in\beta$, then $b$ tracks every component of the unique natural transformation $\gamma ^{\ast}\Rightarrow\delta ^{\ast}$. Conversely, suppose there is a natural transformation $\gamma ^{\ast}\Rightarrow\delta ^{\ast}$, consider its component at the object $(T(A),i)$ where $i$ is the identity function. Any element of $B$ which tracks this component realizes $\gamma\leq\delta$.

Now suppose that $F:{\rm Ass}(A)\to {\rm Ass}(B)$ is a finite-limit preserving $\Gamma$-functor. We may well suppose that $F$ is the identity on the level of sets, as any $\Gamma$-functor is isomorphic to a functor having this property. Consider again the object $(T(A), i)$ of ${\rm Ass}(A)$ and its $F$-image $(T(A),\tilde{F})$ in ${\rm Ass}(B)$, for some map $\tilde{F}:T(A)\to T(B)$. We wish to show that $\tilde{F}$ is a proto-applicative morphism $A\to B$.

Let $P\, =\, \{ (\alpha ,\beta )\in T(A)\times T(A)\, |\,\alpha\beta\dar\}$. For $(\alpha ,\beta )\in P$ put $E(\alpha ,\beta )=\pi\alpha\beta$ (where $\pi$ is the pairing combinator in $A$). Then $(P,E)$ is a regular subobject of $(T(A),i)\times (T(A),i)$ in ${\rm Ass}(A)$ so by assumption on $F$, $F(P,E)$ is a regular subobject of $(T(A),\tilde{F})\times (T(A),\tilde{F})$; we may assume that $F(P,E)=(P,\hat{E})$ with $\hat{E}(\alpha ,\beta )=\rho\tilde{F}(\alpha )\tilde{F}(\beta )$ (where $\rho$ is the pairing combinator in $B$). There is an application map ${\rm app}:(P,E)\to (T(A),i)$, hence we have a map ${\rm app}:(P,\hat{E})\to (T(A),\tilde{F})$. Modulo a little fiddling with realizers, any element of $B$ tracking this map realizes $\tilde{F}$ as applicative morphism $T(A)\to T(B)$.

Furthermore, since any natural transformation between the sort of functors we consider is the identity on the level of sets, if we have a natural transformation $F\Rightarrow G$ then we have a tracking for the identity function as morphism $(T(A),\tilde{F})\to (T(A),\tilde{G})$; such a tracking realizes $\tilde{F}\leq\tilde{G}$.

It is immediate that $\tilde{\gamma ^{\ast}}=\gamma$. The proof that $(\tilde{F})^{\ast}\simeq F$ is similar to the proof of the analogous statement in Longley's theorem.\end{proof}
\medskip

\noindent We can now give another characterization of computationally dense applicative morphisms of pcas. Every applicative morphism $\gamma :A\to B$ of pcas is also an applicative morphism $A\to T(B)$ of order-pcas and hence induces an applicative morphism $\tilde{\gamma}:T(A)\to T(B)$ (and the functors $\gamma ^{\ast}$ from \ref{longleythm} and $(\tilde{\gamma})^{\ast}$ of \ref{assflpthm} coincide); by the biequivalence in the latter theorem, we have the following corollary:
\begin{corollary}\label{cdcharthm} For an applicative morphism $\gamma :A\to B$ the following statements are equivalent:\begin{rlist}
\item $\gamma$ is computationally dense
\item $\tilde{\gamma}$ has a right adjoint (in the 2-category of order-pcas)
\item there is an applicative morphism $\delta :B\to A$ such that $\gamma\delta\leq {\rm id}_B$\end{rlist}\end{corollary}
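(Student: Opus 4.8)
The plan is to prove the cycle of implications (i) $\Rightarrow$ (ii) $\Rightarrow$ (iii) $\Rightarrow$ (i). The equivalence of (i) and (ii) I would handle at the level of functors, transporting along the biequivalence of Theorem~\ref{assflpthm} together with the remark, recorded just above the corollary, that $\gamma^{\ast}=(\tilde\gamma)^{\ast}$; the point is that under a biequivalence a $1$-cell acquires a right adjoint exactly when its image does, so $\tilde\gamma$ has a right adjoint in the $2$-category of order-pcas if and only if $\gamma^{\ast}$ has a right adjoint in the $2$-category of finite-limit-preserving $\Gamma$-functors. Statement (iii) is the manifestation of the same phenomenon one level down, at the level of pcas, and I would connect it to the others using Johnstone's reformulation, Theorem~\ref{cdptj}.

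For (i) $\Rightarrow$ (ii): if $\gamma$ is computationally dense then, by the Hofstra--van Oosten characterization recalled after Definition~\ref{cddef} (equivalently by Theorem~\ref{cdgmthm}), the $\Gamma$-functor $\gamma^{\ast}:{\rm Ass}(A)\to{\rm Ass}(B)$ has a right adjoint $R$. Being a right adjoint, $R$ preserves all limits, in particular finite ones; and it commutes with the global-sections functors --- this is exactly the content of the bipullback square of Theorem~\ref{ptj}(i), and can also be read off the explicit construction of $R$ from the density combinator $m$ of Definition~\ref{cddef}. Hence $R$ is itself a finite-limit-preserving $\Gamma$-functor, so $\gamma^{\ast}\dashv R$ is an adjunction inside the $2$-category of item~2 of Theorem~\ref{assflpthm}; transporting back along the biequivalence yields a right adjoint $\tilde\delta$ to $\tilde\gamma$ in order-pcas, which is (ii).

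For (ii) $\Rightarrow$ (iii): given $\tilde\gamma\dashv\tilde\delta:T(B)\to T(A)$, I would define $\delta:B\to A$ by $\delta(b)=\tilde\delta(\{b\})$, using the principal (singleton) elements of $T(B)$. Since the order on a pca is discrete, $\{b\}\{b'\}\dar$ in $T(B)$ is just $bb'\dar$ in $B$, and $\{b\}\{b'\}=\{bb'\}$; feeding these into the order-pca realizer $r\in T(A)$ of $\tilde\delta$ and fixing any single element $\rho_0\in r$ shows that $\rho_0$ tracks $\delta$ as a pca applicative morphism. Next I would identify $\tilde\gamma\tilde\delta(\{b\})$ with $(\gamma\delta)(b)$: indeed $\tilde\delta(\{b\})=\delta(b)$ and $\tilde\gamma(\alpha)=\bigcup_{a\in\alpha}\gamma(a)$, so the composite at a singleton is exactly $\gamma(\delta(b))$. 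Reading the counit inequality $\tilde\gamma\tilde\delta\leq{\rm id}_{T(B)}$ at singletons and again fixing one element of the realizing subset then gives an $s\in B$ with $sc=b$ for every $c\in(\gamma\delta)(b)$, i.e.\ $\gamma\delta\leq{\rm id}_B$.

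For (iii) $\Rightarrow$ (i): from $\delta$ with $\gamma\delta\leq{\rm id}_B$, witnessed by $s\in B$, I would choose for each $b$ an element $g(b)\in\delta(b)$, so that $\gamma(g(b))\subseteq(\gamma\delta)(b)$ and hence $sb'=b$ for all $b'\in\gamma(g(b))$. Taking $r:=s$, the pair $(r,g)$ is precisely the data of Johnstone's Theorem~\ref{cdptj}, so $\gamma$ is computationally dense. I expect the main obstacle to be in (i) $\Rightarrow$ (ii): not the adjunction itself, which is supplied by the known theory, but the verification that its right adjoint really is a $\Gamma$-functor, so that the biequivalence of Theorem~\ref{assflpthm} is applicable; the reindexing between the order-pca and pca structures in (ii) $\Rightarrow$ (iii) is routine once the singleton bookkeeping is set up.
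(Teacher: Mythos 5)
Your proposal is correct and takes essentially the same route as the paper: (i)$\Rightarrow$(ii) by transporting the adjunction on assemblies through the biequivalence of Theorem~\ref{assflpthm}, (ii)$\Rightarrow$(iii) by evaluating the right adjoint at singletons exactly as the paper does with $\bar{\delta}(b)=\delta(\{b\})$, and (iii)$\Rightarrow$(i) via Johnstone's characterization (Theorem~\ref{cdptj}). The only quibble is that the $\Gamma$-functor property of the right adjoint $R$ is not literally the content of the commuting bipullback square of Theorem~\ref{ptj}(i) (that square gives $\Gamma\circ f^{\ast}\cong\Gamma$ and $f_{\ast}\circ\nabla\cong\nabla$, not $\Gamma\circ f_{\ast}\cong\Gamma$); the needed fact follows instead from $\Gamma\cong{\rm Hom}(1,-)$ together with $f^{\ast}1\cong 1$, as the paper observes in proving Corollary~\ref{cdcharcor} --- though your alternative justification, reading it off the explicit construction of $R$ (which keeps the underlying set), is also sound.
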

\begin{proof} i)$\Rightarrow$ii): if $\gamma$ is computationally dense then it induces a geometric morphism ${\sf RT}(B)\to {\sf RT}(A)$ which, by \ref{ptj}, restricts to an adjunction between $\Gamma$-functors on the categories of assemblies; by \ref{assflpthm} this is induced by an adjunction between proto-applicative morphisms.

\noindent ii)$\Rightarrow$iii): let $\delta :T(B)\to T(A)$ be right adjoint to $\tilde{\gamma}$. Define $\bar{\delta}:B\to T(A)$ by
$$\bar{\delta}(b)\; =\; \delta (\{ b\} )$$
Then $\bar{\delta}$ is an applicative morphism $A\to B$ and $\gamma\bar{\delta}\leq {\rm id}_B$ since $\gamma\bar{\delta}(b)=\tilde\gamma\delta (\{ b\} )$ and $\tilde{\gamma}\dashv\delta$.

\noindent iii)$\Rightarrow$i): this is immediate from \ref{ptj} \end{proof}  
\medskip

\noindent Another corollary is the following: 
\begin{corollary}\label{cdcharcor}The following data are equivalent:\begin{rlist}
\item a geometric morphism ${\sf RT}(B)\to {\sf RT}(A)$ 
\item an adjunction $\diagram {{\rm Ass}(B)}\rto<-.5ex>_{f_{\ast}} & {{\rm Ass}(A)}\lto<-.5ex>_{f^{\ast}}\enddiagram$, $f^{\ast}\dashv f_{\ast}$, and $f^{\ast}$ preserving finite limits
\item an adjunction $\diagram {T(B)}\rto<-.5ex>_{\gamma _{\ast}} & {T(A)}\lto<-.5ex>_{\gamma ^{\ast}}\enddiagram$, $\gamma ^{\ast}\vdash\gamma _{\ast}$, in the 2-category of order-pcas
\item a computationally dense applicative morphism $A\to B$\end{rlist}\end{corollary}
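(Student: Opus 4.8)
The plan is to prove the four items equivalent by running the cycle (i)$\Rightarrow$(ii)$\Rightarrow$(iii)$\Rightarrow$(iv)$\Rightarrow$(i), each arrow quoting one of the results already assembled. Three of the four arrows are essentially bookkeeping between the three settings (toposes, categories of assemblies, order-pcas); the one arrow that needs a genuine, if short, argument is (iii)$\Rightarrow$(iv), where an abstract order-pca adjunction must be recognised as coming from an honest applicative morphism of pcas.

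For (i)$\Rightarrow$(ii) I would invoke Theorem~\ref{ptj}, read exactly as in the proof of Corollary~\ref{cdcharthm}: a geometric morphism restricts to an adjunction of $\Gamma$-functors on the categories of assemblies whose left adjoint preserves finite limits. Concretely, \ref{ptj}ii) gives that $f^{\ast}$ preserves assemblies, so it restricts to a finite-limit-preserving functor ${\rm Ass}(A)\to {\rm Ass}(B)$ (assemblies are closed under finite limits in the topos). To see that $f_{\ast}$ also restricts, I would use \ref{ptj}i): identifying assemblies with the $\neg\neg$-separated objects, such an object embeds by a mono into a $\neg\neg$-sheaf, $f_{\ast}$ preserves this mono, and the bipullback over Set forces $f_{\ast}$ to send $\neg\neg$-sheaves to $\neg\neg$-sheaves; hence $f_{\ast}$ of a $\neg\neg$-separated object is again $\neg\neg$-separated. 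Thus the topos adjunction restricts to the data of (ii). For (ii)$\Rightarrow$(iii) I would then apply the biequivalence of Theorem~\ref{assflpthm}: the right adjoint $f_{\ast}$ is automatically a $\Gamma$-functor (using $\Gamma=\mathrm{Hom}(1,-)$ and $f^{\ast}1=1$ one gets $\Gamma f_{\ast}\cong\Gamma$) and preserves finite limits, so both legs are $1$-cells of category~$2$ of \ref{assflpthm}; since a biequivalence transports adjunctions to adjunctions, the assembly-level adjunction corresponds to an adjunction of proto-applicative morphisms, i.e.\ an adjunction $\gamma^{\ast}\dashv\gamma_{\ast}$ of order-pcas $T(A),T(B)$ as in (iii).

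The crux is (iii)$\Rightarrow$(iv). Here $\gamma^{\ast}\colon T(A)\to T(B)$ is a priori only a proto-applicative morphism, and I must extract from it an honest computationally dense applicative morphism $A\to B$. The key point is that, being the left adjoint of a poset adjunction (the $2$-cells of the order-pca $2$-category are inequalities), $\gamma^{\ast}$ preserves all joins that exist in $T(A)$; since every downset satisfies $\alpha=\bigcup_{a\in\alpha}{\downarrow}a$, the map $\gamma^{\ast}$ is determined by its values on principal downsets. Setting $\gamma(a):=\gamma^{\ast}({\downarrow}a)$ (a nonempty downset of $B$, hence a legitimate value of an applicative morphism) and transporting the realizer of $\gamma^{\ast}$ to one for $\gamma$, one checks $\tilde{\gamma}=\gamma^{\ast}$. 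Then $\tilde{\gamma}$ has the right adjoint $\gamma_{\ast}$, so Corollary~\ref{cdcharthm}, in the form (ii)$\Rightarrow$(i), yields that $\gamma$ is computationally dense. Finally (iv)$\Rightarrow$(i) is immediate from Theorem~\ref{cdgmthm}.

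The main obstacle is precisely this recovery step in (iii)$\Rightarrow$(iv): one must be certain that an abstract order-pca adjunction does not yield a ``wild'' proto-applicative morphism but genuinely one of the form $\tilde{\gamma}$, and the only thing that makes this go through is that a left adjoint is join-preserving, hence pinned down by its values on principal downsets. A secondary point to keep honest is the variance and the $\Gamma$-functor hypotheses when invoking Theorem~\ref{assflpthm}, together with the fact — supplied by Theorem~\ref{ptj} — that for an \emph{arbitrary} geometric morphism both $f^{\ast}$ and $f_{\ast}$ restrict to the categories of assemblies, even though $f_{\ast}$ need not be a regular functor.
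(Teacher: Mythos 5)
Your overall architecture is close to the paper's: both proofs shuttle between the same results (Theorems~\ref{ptj}, \ref{assflpthm}, \ref{cdgmthm} and Corollary~\ref{cdcharthm}), and your (i)$\Rightarrow$(ii), via \ref{ptj}(ii) for $f^{\ast}$ and the bipullback of \ref{ptj}(i) for $f_{\ast}$, is correct. The genuine gap is in your (ii)$\Rightarrow$(iii). To transport the adjunction $f^{\ast}\dashv f_{\ast}$ across the biequivalence of Theorem~\ref{assflpthm}, \emph{both} functors must be 1-cells of the 2-category in item~2 of that theorem, i.e.\ finite-limit-preserving $\Gamma$-functors. You verify this for $f_{\ast}$ (the $\mathrm{Hom}(1,-)$ argument) and then assert ``so both legs are 1-cells'', but for $f^{\ast}$ the hypothesis of (ii) gives only finite-limit preservation; being a $\Gamma$-functor is a genuine extra condition, and it is exactly here that the paper does its real work: since $f_{\ast}$ is a finite-limit-preserving $\Gamma$-functor, it is induced by a proto-applicative morphism, hence commutes with $\nabla$ up to isomorphism; so $f_{\ast}\nabla_B\cong\nabla_A$, and by uniqueness of left adjoints ($\Gamma_Bf^{\ast}\dashv f_{\ast}\nabla_B$ while $\Gamma_A\dashv\nabla_A$) one gets $\Gamma_Bf^{\ast}\cong\Gamma_A$. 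Without this $\nabla$/$\Gamma$ step the appeal to the biequivalence is not licensed; and note you cannot reuse your bipullback argument from (i)$\Rightarrow$(ii) here, since inside the cycle no geometric morphism is available yet at this stage.

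A second, more repairable, problem is in your crux step (iii)$\Rightarrow$(iv). The justification ``$\gamma^{\ast}$ is the left adjoint of a poset adjunction, hence preserves all joins'' is not literally applicable: the 2-cells of the 2-category of order-pcas are \emph{realized} inequalities (the order defined after Definition~\ref{opcaam}), not pointwise inclusions, so $\gamma^{\ast}\dashv\gamma_{\ast}$ is not a Galois connection between the posets $T(A)$ and $T(B)$, and the standard argument yields only that $\gamma^{\ast}$ preserves joins \emph{up to realized isomorphism}. That weaker statement is in fact all you need — one shows $\tilde{\gamma}\leq\gamma^{\ast}$ pointwise for $\gamma (a):=\gamma ^{\ast}(\{a\})$, and manufactures a realizer for $\gamma^{\ast}\leq\tilde{\gamma}$ by combining the unit, the counit, monotonicity (assumable up to isomorphism) and the applicativity realizer of $\gamma^{\ast}$ — but as written your reason would not survive scrutiny. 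A cheaper route avoiding joins altogether: set also $\delta (b):=\gamma _{\ast}(\{ b\})$; then $\gamma\delta (b)\subseteq\gamma^{\ast}\gamma_{\ast}(\{ b\})$ pointwise by monotonicity, so any realizer of the counit realizes $\gamma\delta\leq{\rm id}_B$, and Corollary~\ref{cdcharthm}, (iii)$\Rightarrow$(i), gives computational density. To be fair, the paper itself is terse at the corresponding point (it passes from ``an adjunction of proto-applicative morphisms'' to ``hence a computationally dense morphism'' without comment), so you were right to flag that something needs doing there; it is your proposed justification, not your instinct, that is faulty.
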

\begin{proof} By \ref{cdgmthm} and \ref{ptj}, i) and iv) are equivalent and imply ii); the equivalence between ii) and iii) is theorem~\ref{assflpthm}. Suppose we have an adjunction as in ii). Then $f_{\ast}$ is always a $\Gamma$-functor, since $\Gamma$ is represented by 1 and $f^{\ast}$ preserves 1. So $f_{\ast}$ is, by \ref{assflpthm}, induced by a proto-applicative morphism; but such functors always commute with $\nabla$ (alternatively, one may apply a -- non-constructive -- theorem, 2.3.3 from \cite{LongleyJ:reatls}, which tells us that {\em every\/} $\Gamma$-functor between categories of assemblies commutes with $\nabla$) and therefore their left adjoints commute with $\Gamma$ and we have an adjunction of $\Gamma$-functors, hence an adjunction of proto-applicative morphisms, hence a computationally dense morphism $A\to B$.\end{proof}
\medskip

\noindent In the same way we can characterise which computationally dense $\gamma :A\to B$ induce geometric {\em inclusions}:
\begin{corollary}\label{incchar} A computationally dense applicative morphism $\gamma :A\to B$ induces an inclusion of toposes: ${\sf RT}(B)\to {\sf RT}(A)$ if and only if there is an applicative morphism $\delta :B\to A$ such that $\gamma\delta\simeq {\rm id}_B$.\end{corollary}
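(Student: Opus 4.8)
The plan is to use that a geometric morphism $f\colon{\sf RT}(B)\to{\sf RT}(A)$ is an inclusion exactly when its direct image is full and faithful, equivalently when the counit $f^*f_*\Rightarrow{\rm id}$ is an isomorphism, and to translate this counit condition into the algebra of (order-)pcas. Since $\gamma$ is computationally dense, Corollaries~\ref{cdcharthm} and~\ref{cdcharcor} present $f$ as the adjunction $\tilde\gamma\dashv\gamma_*$ of order-pca morphisms $T(A)\rightleftarrows T(B)$, whose counit is an inequality $\tilde\gamma\gamma_*\leq{\rm id}_{T(B)}$. The whole argument then reduces to two claims: (a) $f$ is an inclusion iff this counit is an equivalence, $\tilde\gamma\gamma_*\simeq{\rm id}_{T(B)}$; and (b) $\tilde\gamma\gamma_*\simeq{\rm id}_{T(B)}$ iff there is an applicative morphism $\delta\colon B\to A$ of pcas with $\gamma\delta\simeq{\rm id}_B$.

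Granting (a), I would prove (b) as follows. For the forward direction, suppose $\tilde\gamma\gamma_*\simeq{\rm id}_{T(B)}$ and set $\delta(b)=\gamma_*(\{b\})$, exactly as in step ii)$\Rightarrow$iii) of Corollary~\ref{cdcharthm}; note that in the discrete pca $B$ one has $\tilde\gamma\gamma_*(\{b\})=\gamma\delta(b)$ on the nose. Evaluating the two realized inequalities of $\tilde\gamma\gamma_*\simeq{\rm id}_{T(B)}$ at the principal downset $\{b\}$ then gives, after extracting single elements of $B$ from the realizing downsets, one element realizing $\gamma\delta\leq{\rm id}_B$ (this is precisely the computation of~\ref{cdcharthm}) and one realizing ${\rm id}_B\leq\gamma\delta$. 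Hence $\gamma\delta\simeq{\rm id}_B$.

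For the converse of (b), start from any $\delta$ with $\gamma\delta\simeq{\rm id}_B$. The lifting $(\cdot)\mapsto\widetilde{(\cdot)}$ is a $2$-functor, so it carries this to $\tilde\gamma\tilde\delta\simeq{\rm id}_{T(B)}$. A short sandwich argument with the adjunction $\tilde\gamma\dashv\gamma_*$ finishes the job: whiskering the unit ${\rm id}_{T(A)}\leq\gamma_*\tilde\gamma$ with $\tilde\delta$ gives $\tilde\delta\leq\gamma_*\tilde\gamma\tilde\delta$, and composing with the counit $\tilde\gamma\tilde\delta\leq{\rm id}_{T(B)}$ yields $\tilde\delta\leq\gamma_*$; whiskering this with $\tilde\gamma$ and inserting ${\rm id}_{T(B)}\leq\tilde\gamma\tilde\delta$ produces the chain ${\rm id}_{T(B)}\leq\tilde\gamma\tilde\delta\leq\tilde\gamma\gamma_*\leq{\rm id}_{T(B)}$, whence $\tilde\gamma\gamma_*\simeq{\rm id}_{T(B)}$. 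It is worth stressing that this makes no assumption that $\delta$ be the right adjoint: any $\delta$ with $\gamma\delta\simeq{\rm id}_B$ forces the genuine counit to be invertible.

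The step I expect to be the main obstacle is claim (a): passing between the topos-level condition (the counit is invertible on all of ${\sf RT}(B)$) and the algebraic one (it is invertible as a $2$-cell of order-pcas, a statement only about the regular projectives). The natural route is to observe that, by Theorem~\ref{TAregcomplthm} and the order-pca version of Longley's theorem, $\tilde\gamma$ and $\gamma_*$ induce \emph{regular} functors between the regular categories ${\rm Ass}(T(A))$ and ${\rm Ass}(T(B))$, giving a genuine adjunction of regular functors whose counit is invertible precisely when $\tilde\gamma\gamma_*\simeq{\rm id}_{T(B)}$. Passing to exact completions (which yields the realizability toposes and is $2$-functorial on regular categories and regular functors) extends this adjunction to $f^*\dashv f_*$ and its counit to the topos-level counit; since the assemblies form a generating family of regular projectives, an extended natural transformation is invertible iff its restriction to them is, so the two counit conditions coincide. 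The care needed here, ensuring that ``inclusion'', tested on all objects, is faithfully reflected by data living only on the projectives, and that Theorem~\ref{ptj} legitimately places us in this regular-functor situation, is the real content of the proof.
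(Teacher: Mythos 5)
Your skeleton --- (a) the inclusion condition is equivalent to the order-pca counit being an equivalence, (b) that equivalence holds iff some $\delta$ with $\gamma\delta\simeq{\rm id}_B$ exists --- is the right decomposition (the paper gives no written proof, presenting the corollary as following ``in the same way'' as Corollaries~\ref{cdcharthm} and~\ref{cdcharcor}), and your proof of (b) is correct in both directions: the extraction of realizers at principal downsets $\{b\}$ is exactly the computation of \ref{cdcharthm}, and your sandwich argument for the converse is sound and rightly avoids assuming that $\delta$ is the right adjoint. The forward half of (a) is also fine, via \ref{ptj} and \ref{assflpthm}. But the converse half of (a) --- which you correctly single out as the real content --- is where your argument has a genuine gap.

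The mechanism you propose fails on two counts. First, the completion is misidentified: by Theorem~\ref{TAregcomplthm}, ${\rm Ass}(T(A))$ is the regular (reg/lex) completion of ${\rm Ass}(A)$, so its exact completion is the ex/lex completion of ${\rm Ass}(A)$, \emph{not} ${\sf RT}(A)={\rm Ass}(A)_{\rm ex/reg}$; these genuinely differ, since every assembly becomes regular projective in the former, whereas e.g.\ the assembly $(\mathbb{N},E)$ with $E(n)=\{m\mid m\geq n\}$ is not regular projective in $\Eff$ (its canonical cover by a partitioned assembly has no section). If instead you complete ${\rm Ass}(A)$ and ${\rm Ass}(B)$ themselves, then $2$-functoriality of ex/reg requires the direct image ${\rm Ass}(B)\to{\rm Ass}(A)$ to be a \emph{regular} functor, which by Corollary~\ref{regprojcor} happens only when $\gamma$ is projective --- and the paper's example immediately following this corollary (${\cal K}_2^{\rm rec}\to{\cal K}_1$) shows this can fail. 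Second, the principle ``invertible iff invertible on the regular projectives'' is not available for the counit: it is valid for transformations between regular functors, which carry the exact diagrams $R\rightrightarrows P\twoheadrightarrow X$ to exact diagrams, but $f^*f_*$ is not regular for the same reason, and chasing naturality along a projective cover of $X$ only yields that $\epsilon_X$ is a split epimorphism, not an isomorphism. What actually closes the gap is the pointwise description of the adjunction supplied by the tripos machinery behind Theorem~\ref{cdgmthm} and item 4 of Example~\ref{locopexamples}: an arbitrary object of ${\sf RT}(B)$ is a set $Y$ equipped with a ${\cal P}(B)$-valued partial equivalence relation $\approx$, and up to isomorphism $f^*(X,\sim)=(X,\tilde\gamma\circ\sim)$ and $f_*(Y,\approx)=(Y,\gamma_*\circ\approx)$. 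With this description, the uniform realizers witnessing $\tilde\gamma\gamma_*\simeq{\rm id}_{T(B)}$ realize an isomorphism $f^*f_*(Y,\approx)\cong(Y,\approx)$ for every object simultaneously, so the counit is invertible and $f$ is an inclusion; it is this uniformity over all objects, rather than naturality plus a generating family, that the paper's ``in the same way'' implicitly relies on.
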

We conclude this section with the promised example of a computationally dense applicative morphism which is not projective:
\begin{example}\em Consider the pca ${\cal K}_2^{\rm rec}$ (see \cite{OostenJ:reaics}, 1.4.9) and the applicative morphism ${\cal K}_2^{\rm rec}\to {\cal K}_1$ which sends every total recursive function to the set of its indices (\cite{OostenJ:reaics}, p.\ 95). For recursion-theoretic reasons, this can {\em not\/} be isomorphic to a single-valued relation, so this is an example of a geometric morphism $\Eff\to {\sf RT}({\cal K}_2^{\rm rec})$ which is not regular.\end{example}

\subsection{Intermezzo: total pcas}
In this small section we include some material on total pcas; it contains a characterization of the pcas which are isomorphic to a total one.

A pca $A$ is called {\em total\/} if for all $a$ and $b$, $ab\dar$. The following results have been established about total pcas:\begin{itemize}
\item The topos $\Eff$ is not equivalent to a realizability topos on a total pca (\cite{JohnstonePT:irt}).
\item Every total pca is isomorphic to a nontotal one (\cite{OostenJ:genfrr}).
\item Every realizability topos is covered (in the sense of a geometric surjection) by a realizability topos on a total pca (\cite{OostenJ:parcaf}).
\end{itemize}

\begin{definition}\label{almosttotal}\em Call an element $a$ of a pca $A$ {\em total\/} if for all $b\in A$, $ab\dar$. Call a pca $A$ {\em almost total\/} if for every $a\in A$ there is a total element $b\in A$ such that for all $c\in A$, $bc\preceq ac$.

A pca is called {\em decidable\/} if there is an element $d\in A$ which decides equality in $A$, that is: for all $a,b\in A$,
$$dab\; =\; \left\{\begin{array}{cl} {\sf T} & \text{if }a=b\\ {\sf F} & \text{if }a\neq b\end{array}\right.$$\end{definition}
\begin{proposition}\label{decnottotal} A nontrivial decidable pca is never almost total.\end{proposition}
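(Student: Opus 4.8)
The plan is to argue by contradiction: assume $A$ is nontrivial, decidable and almost total, and then manufacture a single partial element whose totalization is self-defeating. The guiding intuition is the classical fact that the self-application map $c\mapsto cc$ admits no total extension once we can uniformly \emph{negate} its value, and decidability is exactly what supplies this negation.

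First I would combine decidability with combinatory completeness to build an ``anti-diagonal'' element. Let $d$ be the equality decider, and recall ${\sf T},{\sf F}$ with ${\sf T}ab=a$, ${\sf F}ab=b$. Put $t(x)=(d(xx){\sf T}){\sf F}{\sf T}$ and $a=\langle x\rangle t$, so that $ac\preceq (d(cc){\sf T}){\sf F}{\sf T}$. Since $dab\dar$ for all $a,b$, the right-hand side is defined exactly when $cc\dar$, and then evaluates to ${\sf F}$ if $cc={\sf T}$ and to ${\sf T}$ otherwise. The point of this bookkeeping is the key property: whenever $cc\dar$ we have $ac\dar$ and $ac\neq cc$. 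The three cases $cc={\sf T}$, $cc={\sf F}$, and $cc\notin\{{\sf T},{\sf F}\}$ each yield a value distinct from $cc$, the first of these being where nontriviality (${\sf T}\neq {\sf F}$) is used.

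Next I would invoke almost totality to obtain a total element $b$ with $bc\preceq ac$ for all $c$, and evaluate at the diagonal point $c=b$. Because $b$ is total, $bb\dar$; hence $ab\dar$, and by the anti-diagonal property $ab\neq bb$. On the other hand $bb\preceq ab$ together with $ab\dar$ forces $bb=ab$, a contradiction. Since $a$ is thus an element of $A$ admitting no total extension, $A$ is not almost total.

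The step I would watch most carefully is that almost totality is non-constructive: it merely asserts the existence of \emph{some} total $b$ dominating $a$, with no handle on which one. The construction sidesteps this precisely by being diagonal — the witness defeating a candidate extension $b$ is $b$ itself, so \emph{every} total $b$ fails at its own code. A secondary point to get right is the matching of domains under the $\preceq$ of the ${\sf s}$-axiom: I only ever need the implication ``$cc\dar$ implies $ac\dar$ and $ac=t(c)$'', which is exactly what $\preceq$ delivers, so the weaker clause iii) of Definition~\ref{pcadef} causes no difficulty. Finally I would verify that $ac\neq cc$ genuinely covers non-Boolean values of $cc$, as that is the case most easily overlooked.
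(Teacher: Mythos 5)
Your proof is correct, but it takes a genuinely different route from the paper's. You construct an explicit anti-diagonal element $a=\langle x\rangle (d(xx){\sf T}){\sf F}{\sf T}$ satisfying: whenever $cc\dar$, also $ac\dar$ and $ac\neq cc$ (decidability supplies the negation of the value of self-application, nontriviality supplies ${\sf T}\neq {\sf F}$), and then you defeat any total extension $b$ of $a$ by evaluating it at its own code: $bb=ab\neq bb$. The paper argues differently: it fixes the element $e$ with $ex\simeq x{\sf k}$, takes a total extension $g$ of $e$, and invokes the recursion theorem for pcas (\cite{OostenJ:reaics}, 1.3.4) to produce $h$ with $hy\simeq d(gh)aba$ for chosen $a\neq b$; there the self-reference is supplied by the recursion theorem ($h$ queries $gh$ inside its own definition), and the contradiction is that $eh=h{\sf k}$ equals $b$ exactly when $eh=a$. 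Both arguments share the same skeleton --- decidability gives a case distinction on values, and the hypothetical total extension is defeated diagonally --- but yours is more elementary: it needs only combinatory completeness and the Boolean combinators, with the self-reference obtained for free by applying $b$ to itself, whereas the paper's needs the fixed-point machinery. Your attention to the direction of $\preceq$ is also well placed and matches the paper's conventions: $(\langle x\rangle t)c\preceq t(c)$ gives ``$cc\dar$ implies $ac=t(c)$'', and $bb\preceq ab$ with $b$ total gives ``$ab\dar$ implies $bb=ab$'', which are exactly the two implications your argument consumes.
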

\begin{proof} Let $A$ be nontrivial and decidable. Choose $e\in A$ such that for all $x\in A$, $ex\simeq x{\sf k}$. Pick elements $a\neq b\in A$. Suppose that $g$ is a total element for $e$ as in definition~\ref{almosttotal}. By the recursion theorem for $A$ (\cite{OostenJ:reaics}, 1.3.4) there is $h\in A$ satisfying for all $y\in A$:
$$hy\simeq d(gh)aba$$
Then $hy=b$ if $gh=a$, and $hy=a$ otherwise (recall that ${\sf T}xy=x,{\sf F}xy=y$). Since $h$ is total, we have $eh=h{\sf k}$. But now,
$$eh\; =\; h{\sf k}\; =\;\left\{\begin{array}{cl}b & \text{if }gh=a \\ a & \text{if }gh\neq a\end{array}\right.\; =\;\left\{\begin{array}{cl}b & \text{if }eh=a\\ a & \text{if }eh\neq a\end{array}\right.$$
A clear contradiction.\end{proof}
\begin{proposition}\label{altotalprop}Let $A$ be a pca. The following four conditions are equivalent:\begin{rlist}
\item $A$ is almost total.
\item There is an element $g\in A$ such that for all $e\in A$, $ge$ is total and for all $x$, $gex\preceq ex$.
\item $A$ is isomorphic to a total pca.\end{rlist}\end{proposition}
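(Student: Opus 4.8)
The plan is to establish the equivalences by proving (ii)$\Rightarrow$(i)$\Rightarrow$(ii), then (ii)$\Rightarrow$(iii), and finally (iii)$\Rightarrow$(i). The direction (ii)$\Rightarrow$(i) is immediate: given $g$ as in (ii) and any $a\in A$, the element $b:=ga$ is total and satisfies $bc=gac\preceq ac$ for all $c$. The substance of (i)$\Rightarrow$(ii) is upgrading a pointwise totalisation to a \emph{uniform} one, and the trick is to invoke almost-totality a single time, on the ``uncurried application'' combinator. Put $q:=\langle p\rangle(\pi_0 p)(\pi_1 p)$, so that $q(\pi e x)\preceq ex$. By (i) choose a total $q'$ with $q'p\preceq qp$ for all $p$, and set $g:=\langle e x\rangle q'(\pi e x)$. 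Since $\pi e x\dar$ always and $q'$ is total, $gex\dar$ for every $x$, so $ge$ is total; and when $ex\dar$ we get $gex=q'(\pi e x)=q(\pi e x)=ex$, i.e.\ $gex\preceq ex$. This gives (ii).

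For (ii)$\Rightarrow$(iii) I would define a new application on the same carrier by $a\cdot'b:=gab$. Because $ga$ is total, $\cdot'$ is total, and $gab\preceq ab$ means $\cdot'$ refines the original application. One then checks that $(A,\cdot')$ is a total pca: the original ${\sf k}$ works, since $\mathsf{k}\cdot'a\cdot'b=g(g\mathsf{k}a)b=g(\mathsf{k}a)b=\mathsf{k}ab=a$ (using that $g$ fixes defined values), while for $\mathsf{s}$ one must take the \emph{new} combinator ${\sf s}':=\langle abc\rangle g(gac)(gbc)$, and verify by peeling off the $g$'s that $\mathsf{s}'\cdot'a\cdot'b\cdot'c=g(gac)(gbc)=(a\cdot'c)\cdot'(b\cdot'c)$. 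Finally the identity function on $A$ is an isomorphism $A\cong(A,\cdot')$ in the sense of applicative morphisms: as a morphism $A\to(A,\cdot')$ it is realised by ${\sf i}$, as a morphism $(A,\cdot')\to A$ it is realised by $g$ itself, and both composites are plainly the identity.

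The remaining implication (iii)$\Rightarrow$(i) is where the real work lies. A total pca is trivially almost total (take $b:=a$), so the task is to transport almost-totality along an isomorphism $A\cong B$ with $B$ total. Here I may take single-valued representatives $f\colon A\to B$ and $h\colon B\to A$ with their realizers, together with elements $n_1,n_2\in A$ satisfying $n_2z=h(f(z))$ (total) and $n_1(h(f(z)))=z$ for all $z$. For fixed $a$ the ``forward'' map $c\mapsto h(r_\gamma f(a)f(c))$ is total in $c$ (the application is carried out inside the total pca $B$, where it never diverges), and it equals $n_2(ac)$ whenever $ac\dar$; applying the ``counit'' $n_1$ then returns exactly $ac$ on the domain of $a$.

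The main obstacle is precisely this last decoding step. The composite $\hat a c:=n_1(h(r_\gamma f(a)f(c)))$ returns $ac$ whenever $ac\dar$, but it is guaranteed \emph{total} only if $n_1$ is always fed elements of $h(f(A))$ — equivalently, only if the intermediate $B$-value $r_\gamma f(a)f(c)$ always lands in $f(A)$, which a priori holds just when $ac\dar$. Off the domain of $a$ the decoding $n_1$ may diverge, and every naive repair either preserves the correct value at the cost of totality, or restores totality at the cost of a systematic shift by the unit/counit (the slippage $n_1\circ n_2=\id$). Thus the crux is to manufacture a genuinely total element of $A$ agreeing with $a$ \emph{exactly} on its domain. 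I expect to resolve this by forcing the forward computation to remain inside $f(A)$, reducing to an isomorphism whose underlying function on $A$ is the identity (as produced in the step (ii)$\Rightarrow$(iii) above), so that $f$ is surjective and the decoding is automatically total; carrying out this reduction is the point at which the genuine content of (iii)$\Rightarrow$(i) is concentrated.
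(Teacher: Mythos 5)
Your implications (ii)$\Rightarrow$(i), (i)$\Rightarrow$(ii) and (ii)$\Rightarrow$(iii) are correct and essentially identical to the paper's own proof: the same single appeal to almost-totality on the uncurried application combinator, the same new application $a\cdot'b=gab$ with ${\sf k}$ reused and ${\sf s}'=\langle abc\rangle g(gac)(gbc)$, and the identity function as the isomorphism (your realizer ${\sf i}$ in place of the paper's ${\sf s}'\cdot'{\sf k}\cdot'{\sf k}$ is an inessential variant). The problem is (iii)$\Rightarrow$(i), which you have not actually proved. You correctly isolate the obstruction: the decoder $n_1$ realizing $\delta\gamma\leq{\rm id}_A$ is only guaranteed to converge on inputs of the form $h(f(z))$, so your candidate $\hat{a}c=n_1(h(r_\gamma f(a)f(c)))$ is total only if the intermediate $B$-value always lands in $f(A)$. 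But your resolution is merely announced (``I expect to resolve this by\dots''), not carried out, so this direction remains a conjecture about how a proof might go. Moreover, as phrased the intended reduction does not parse: $B$ is an arbitrary total pca on a possibly different carrier, so there is no ``isomorphism whose underlying function on $A$ is the identity'' available, and step (ii)$\Rightarrow$(iii) gives you no purchase on a given abstract isomorphism $A\cong B$.

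What is missing is exactly the normal form the paper invokes at the start of its (iii)$\Rightarrow$(i) argument: using Longley's projectivity theorem (theorem~\ref{amtypesthm}ii)), one may assume the isomorphism is given by functions $f:A\to B$ and $g:B\to A$ that are \emph{mutually inverse bijections}, with $r\in B$ realizing $f$ and $s\in A$ realizing $g$. Once $gf={\rm id}_A$ holds on the nose, your obstacle evaporates rather than needing to be patched: set $a'=s(s\,g(r)\,(gf(a)))$; then for every $x\in A$, since $x=gf(x)$ puts every argument in the image of $g$ (where the realizer $s$ applies) and $B$ is total, $a'x=s(g(rf(a)))(gf(x))=g(rf(a)f(x))$ is defined for all $x$, and when $ax\dar$ one has $rf(a)f(x)=f(ax)$, hence $a'x=g(f(ax))=ax$ exactly --- no unit/counit slippage, because $g\circ f$ is the identity function, not merely isomorphic to it. So to close the gap you must either cite or prove this bijectivity normal form (note that $f$ and $h$ in your setup are automatically injective, since $n_1(h(f(z)))=z$; the genuine content is upgrading single-valued mutually-adjoint morphisms to mutually inverse bijections) and then run the computation above; as written, the direction you yourself identify as ``where the real work lies'' is left open.
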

\begin{proof} i)$\Rightarrow$ii): assume $A$ is almost total. Pick $f\in A$ such that for all $y$, $fy\simeq\pi _0y(\pi _1y)$ (recall that $\pi ,\pi _0,\pi _1$ are the pairing and unpairing combinators in $A$). By assumption there is a total element $h$ for $f$ as in definition~\ref{almosttotal}. Let $g$ be such that $gxy\simeq h(\pi xy)$.

Then for every $e\in A$, $ge$ is a total element and if $ex\dar$ then
$$gex=h(\pi ex)=f(\pi ex)=ex$$
so $gex\preceq ex$ as required.

\noindent ii)$\Rightarrow$iii): assume $A$ satisfies condition ii). Define a binary function $\ast$ on $A$ by putting $a\ast b=gab$. We have
$${\sf k}\ast a\ast b=g(g{\sf k}a)b={\sf k}ab=a$$
and if ${\sf s}'=\langle xyz\rangle g(gxz)(gyz)$ then
$$\begin{array}{lclcl} {\sf s}'\ast a\ast b\ast c & = & g(g(g{\sf s}'a)b)c & = & g(\langle z\rangle g(gaz)(gbz))c  \\
 & = & g(gac)gbc) & = & a\ast c\ast (b\ast c)  \end{array}$$
 So, $(A,\ast )$ is a total pca. The identity function $A\to A$ is an applicative morphism $A\to (A,\ast )$, realized by ${\sf s}'\ast {\sf k}\ast {\sf k}$ in $(A,\ast )$, and in the other direction it is realized by $g\in A$. So $A$ is isomorphic to $(A,\ast )$.
 
\noindent iii)$\Rightarrow$i): suppose $A$ is isomorphic to $B$ and $B$ is total. By \ref{amtypesthm}ii) we may assume that the isomorphism is given by functions $f:A\to B$ and $g:B\to A$ which are each other's inverse; suppose $r\in B$ realizes $f$ as applicative morphism, and $s\in A$ realizes $g$.

For $a\in A$ let $a'=s(sg(r)gf(a))$. For any $x\in A$ we have:
$$\begin{array}{lclcl} a'x & = & s(sg(r)gf(a))x & = & s(g(rf(a))x \\
 & = & s(g(rf(a)))gf(x) & = & g(rf(a)f(x) \end{array}$$
So, $a'x\dar$, and if $ax\dar$ then $a'x=gf(ax)=ax$. So $A$ is almost total, as desired.\end{proof}

\subsection{Discrete computationally dense morphisms}
We employ the following convention for a parallel pair of geometric morphisms $\alpha ,\beta$ between realizability toposes: we write $\alpha\leq\beta$ if there is a (necessarily unique) natural transformation $\alpha ^{\ast}\Rightarrow\beta ^{\ast}$.
\begin{theorem}\label{discthm}Let $\gamma :A\to B$ be a discrete, computationally dense applicative morphism.\begin{rlist}
\item There is a pca of the form $A[f]$ such that the geometric morphism ${\sf RT}(B)\to {\sf RT}(A)$ factors through the inclusion ${\sf RT}(A[f])\to {\sf RT}(A)$ by a geometric morphism $\alpha :{\sf RT}(B)\to {\sf RT}(A[f])$
\item Moreover, there is a geometric morphism $\beta :{\sf RT}(A[f])\to {\sf RT}(B)$ satisfying $\alpha\beta\leq {\rm id}_{{\sf RT}(A[f])}$ and ${\rm id}_{{\sf RT}(B)}\leq\beta\alpha$.
\item If $\gamma$ induces an inclusion of toposes, then $\beta\alpha\simeq {\rm id}_{{\sf RT}(B)}$, so ${\sf RT}(B)$ is a retract of ${\sf RT}(A[f])$.
\item If $\gamma$ is projective then $\alpha\beta\simeq {\rm id}_{{\sf RT}(A[f])}$, so ${\sf RT}(A[f])$ is a retract of ${\sf RT}(B)$.
\item Hence, if $\gamma$ is projective and induces an inclusion, ${\sf RT}(B)$ is equivalent to ${\sf RT}(A[f])$.\end{rlist}\end{theorem}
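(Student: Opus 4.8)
The plan is to move the entire factorization ${\sf RT}(B)\to{\sf RT}(A[f])\to{\sf RT}(A)$ down to the level of pcas: I will factor $\gamma$ through the canonical morphism $\iota_f:A\to A[f]$ of section~\ref{relrecsection}, and then exhibit an \emph{adjunction} $\rho\dashv\sigma$ of applicative morphisms that induces the geometric adjunction $\alpha\dashv\beta$ concealed in parts ii)--v). The bookkeeping rests on two facts about the passage from a computationally dense morphism to its induced geometric morphism: it is monotone on $2$-cells (by \ref{longleythm}/\ref{assflpthm}, a natural transformation exists exactly when $\gamma\leq\delta$) and contravariant on composites (the morphism induced by a composite is the composite, in the reverse order, of the induced morphisms). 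Consequently an adjunction $\rho\dashv\sigma$ with unit ${\rm id}_B\leq\sigma\rho$ and counit $\rho\sigma\leq{\rm id}_{A[f]}$ descends to geometric morphisms $\alpha$ (induced by $\sigma$) and $\beta$ (induced by $\rho$) satisfying ${\rm id}_{{\sf RT}(B)}\leq\beta\alpha$ and $\alpha\beta\leq{\rm id}_{{\sf RT}(A[f])}$, which is exactly ii).

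For i) I would first extract computational density data: by \ref{cdcharthm} an applicative morphism $\delta:B\to A$ with $\gamma\delta\leq{\rm id}_B$, and by \ref{cdptj} a function $g:B\to A$ and $r\in B$ with $rb'=b$ for all $b'\in\gamma(g(b))$. Using this together with discreteness I define a partial function $f:A\rightharpoonup A$ internalizing the decoding performed by $r$; discreteness is what legitimises the definition, since the pairwise disjointness of the sets $\gamma(a)$ makes the value to be decoded single-valued. One then checks that $f$ is representable with respect to $\gamma$ and that $\gamma$ is decidable (\ref{amtypescor}i), so the universal property of $\iota_f$ yields a factorization $\gamma\simeq\sigma\iota_f$ with $\sigma:A[f]\to B$. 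Now $\sigma$ is computationally dense, because $\sigma\iota_f\delta=\gamma\delta\leq{\rm id}_B$ and \ref{cdcharthm}iii) applies; let $\alpha$ be the induced geometric morphism ${\sf RT}(B)\to{\sf RT}(A[f])$. Since $\iota_f$ induces the inclusion $i:{\sf RT}(A[f])\to{\sf RT}(A)$, contravariance gives that the geometric morphism induced by $\gamma$ is $i\circ\alpha$, which is i).

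The heart of ii) is the construction of the \emph{left} adjoint $\rho\dashv\sigma$. Because $\iota_f$ is the identity on underlying sets and $\gamma$ is discrete, $\sigma$ is discrete as well, and I would use this disjointness to define $\rho:B\to A[f]$ by decoding $b$ back into $A[f]$, the oracle $f$ supplying exactly the power needed to perform the decoding inside $A[f]$. One verifies ${\rm id}_B\leq\sigma\rho$ and $\rho\sigma\leq{\rm id}_{A[f]}$; the latter also shows $\rho$ computationally dense (via \ref{cdcharthm}iii) with witness $\sigma$), so $\rho$ induces $\beta:{\sf RT}(A[f])\to{\sf RT}(B)$ and the two inequalities descend to ii). For iii), if $\gamma$ induces an inclusion then by \ref{incchar} there is $\delta$ with $\gamma\delta\simeq{\rm id}_B$; this forces the left adjoint to agree with the density witness $\iota_f\delta$, so $\sigma\rho\simeq\sigma\iota_f\delta=\gamma\delta\simeq{\rm id}_B$, upgrading the unit to an isomorphism and giving $\beta\alpha\simeq{\rm id}_{{\sf RT}(B)}$. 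For iv), projectivity lets us take $\gamma$, hence $\sigma$, single-valued (\ref{amtypes}iii, \ref{amtypesthm}ii)/v)); exactness of the decoding then upgrades the counit, $\rho\sigma\simeq{\rm id}_{A[f]}$, giving $\alpha\beta\simeq{\rm id}_{{\sf RT}(A[f])}$. Part v) is then immediate, since with both unit and counit invertible $\alpha\dashv\beta$ is an adjoint equivalence.

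The step I expect to be the main obstacle is pinning down $f$ and the pair $\sigma,\rho$ concretely: one must define $f$ so that it is at once a genuine function on $A$ (here discreteness is indispensable, collapsing the a priori multivalued decoding to a single value), representable with respect to $\gamma$ (so the universal property of $\iota_f$ applies), and exactly strong enough that the decoding map $\rho$ lands in $A[f]$ and is left adjoint to $\sigma$. Producing the explicit realizers for ${\rm id}_B\leq\sigma\rho$ and $\rho\sigma\leq{\rm id}_{A[f]}$, and checking that they interact correctly with the oracle-combinators of $A[f]$, is the delicate calculation; once $\rho\dashv\sigma$ is in place, iii)--v) follow formally from \ref{incchar}, \ref{amtypesthm}, and the $2$-functoriality of the induced-geometric-morphism assignment.
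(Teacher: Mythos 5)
Your part i) is essentially the paper's argument and is fine: the paper defines $f$ by $f(a)=b$ iff $a\in\delta'\gamma(b)$ (where $\delta'$ is the density witness, discrete by \ref{amtypesthm}i) because $\gamma\delta'\leq{\rm id}_B$), checks that the realizer of $\gamma\delta'\leq{\rm id}_B$ represents $f$ w.r.t.\ $\gamma$, and factors through $\iota_f$ using decidability; your verification that $\sigma$ is computationally dense via the witness $\iota_f\delta$ and \ref{cdcharthm}iii), and the composition argument giving the factorization $i\circ\alpha$, are also correct. The genuine gap is in part ii): the adjunction $\rho\dashv\sigma$ you propose to build does not exist in general, because the unit ${\rm id}_B\leq\sigma\rho$ is not realizable. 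That inequality must be realized by an element of $B$, and $B$ has gained no computational power from the oracle (the oracle only helps \emph{inside} $A[f]$, which is exactly why the counit-type inequality $\rho\sigma\leq{\rm id}_{A[f]}$ \emph{is} realizable there); a unit realizer would have to compute, from an arbitrary $b\in B$, a $\gamma$-code of $b$, i.e.\ invert the decoding, and this is impossible. Concretely, take $\gamma:{\cal K}_1\to{\cal K}_2^{\rm rec}$ the Curry-numeral morphism (discrete, projective, computationally dense since ${\cal K}_2^{\rm rec}$ is effectively numerical). A unit realizer $t$ would satisfy $t|b=\overline{n_b}$ with $n_b\in\rho(b)$, and by continuity of application in ${\cal K}_2$ the number $n_b$ depends only on a finite prefix of $b$. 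But from any element of $\rho(b)$ one can recover every value $b(k)$ computably in ${\cal K}_1[f]$: iterate the applicativity realizer of $\rho$ against fixed elements of $\rho({\rm ev})$ and $\rho(\overline{k})$, and decode with the counit realizer, which sends any element of $\rho(\overline{m})$ to $m$. Hence $b\mapsto n_b$ would be injective on total recursive functions while also being locally constant --- a contradiction. So no applicative $\rho$ with both your unit and counit exists, even in this projective case.

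What the paper proves instead --- and what is actually realizable --- is the pair of \emph{co}-inequalities, with no left adjoint anywhere: taking $\delta$ to be the right adjoint of $\tilde{\gamma}$ at the \emph{order-pca} level and $\delta'(b)=\delta(\{b\})$, one has $\tilde{\delta'}\tilde{\gamma_f}\leq{\rm id}_{T(A[f])}$ (realized by the element representing $f$ in $A[f]$, i.e.\ by the oracle) \emph{and} $\tilde{\gamma_f}\tilde{\delta'}\leq\tilde{\gamma_f}\delta_f\leq{\rm id}_{T(B)}$ (using $\tilde{\delta'}\leq\delta_f$, which holds once $\delta$ is taken inclusion-preserving, together with the counit of $\tilde{\gamma_f}\dashv\delta_f$). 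The genuine adjunctions in the picture ($\tilde{\gamma_f}\dashv\delta_f$ and $\tilde{\delta'}\dashv\zeta$) live in order-pcas, not in ${\rm PCA}$; $\delta'$ is related to them only through $\tilde{\delta'}\leq\delta_f$. In particular ii) cannot be read as asserting an adjunction $\beta\dashv\alpha$: the 2-cells the proof produces are $(\alpha\beta)^*\Rightarrow{\rm id}$ and $(\beta\alpha)^*\Rightarrow{\rm id}$, i.e.\ both composites lie below the identity, and the example above shows the unit-direction 2-cell you aim for can genuinely fail. This also undercuts your derivations of iii) and iv): in the paper, iii) follows directly from $\gamma\delta'\simeq{\rm id}_B$ (Corollary \ref{incchar}), and iv) from the fact that projectivity forces $\tilde{\delta'}\simeq\delta_f$, so that $\tilde{\delta'}\tilde{\gamma_f}\leq{\rm id}_{T(A[f])}\leq\delta_f\tilde{\gamma_f}\simeq\tilde{\delta'}\tilde{\gamma_f}$ yields the isomorphism; neither step needs, nor could use, a left adjoint to $\sigma$.
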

\begin{proof} By \ref{cdcharthm}ii), $\tilde{\gamma}:T(A)\to T(B)$ has a right adjoint $\delta$. Let us write $\delta '$ for the morphism $\bar{\delta}$ from the proof of \ref{cdcharthm}: $\delta '(b)=\delta (\{ b\} )$. Assume, as we may, that $\delta $ preserves inlusions. This means that $\tilde{\delta '}\leq\delta$ as morphisms $T(B)\to T(A)$. We have $\gamma\delta '\leq {\rm id}B$, so $\delta '$ is discrete by \ref{amtypesthm}i).

Since both $\gamma$ and $\delta '$ are discrete, so is $\delta '\gamma$ and we have a partial function $f:A\to A$ defined by: $f(a)=b$ if and only if $a\in\delta '\gamma (b)$. The partial function $f$ is representable w.r.t.\ $\gamma$, for if $\epsilon\in B$ realizes $\gamma\delta '\leq {\rm id}_B$ and $f(a)=b$, $c\in\gamma (a)$, then $a\in\delta '\gamma (b)$ so $c\in\gamma\delta '\gamma (b)$ so $\epsilon c\in\gamma (b)$; hence $\epsilon $ represents $f$ w.r.t.\ $\gamma$. Since, by section~\ref{relrecsection}, $\gamma$ factors through $\iota _f$, the geometric morphism ${\sf RT}(B)\to {\sf RT}(A)$ factors through the inclusion ${\sf RT}(A[f])\to {\sf RT}(A)$. This proves i). The geometric morphism $\alpha$ is induced by $(\gamma _f)^{\ast}\dashv\delta _f$ on the level of assemblies; here $\gamma _f$ and $\delta _f$ are the same relations on the level of sets as $\gamma ,\delta$ respectively.
\medskip

\noindent We can regard $\delta '$ also as applicative morphism $B\to A[f]$. Now in $A[f]$, $\delta '\gamma _f\leq {\rm id}_{A[f]}$, since if $u$ represents $f$ in $A[f]$ then $u$ realizes this inequality. This means that $\delta ':B\to A[f]$ is computationally dense, by \ref{cdcharthm}. So there is a geometric morphism $\beta :{\sf RT}(A[f])\to {\sf RT}(B)$; let $\zeta :T(A[f])\to T(B)$ be the right adjoint to $\tilde{\delta '}$. We have the following diagram of order-pcas:
$$\diagram {T(B)}\rto<-.5ex>_{\delta} & {T(A[f])}\lto<-.5ex>_{\tilde{\gamma}}\rto<-.5ex>_{\zeta} & {T(B)}\lto<-.5ex>_{\tilde{\delta '}}\enddiagram$$
We have $\tilde{\gamma}\tilde{\delta '}\leq\tilde{\gamma}\delta\leq {\rm id}_{T(B)}$ and $\tilde{\delta '}\tilde{\gamma}\leq {\rm id}_{T(A[f])}$, so this proves ii).
\medskip

\noindent If $\gamma$ induces an inclusion then $\gamma\delta '\simeq {\rm id}_B$ so $\beta\alpha\simeq {\rm id}_{{\sf RT}(B)}$ and ${\sf RT}(B)$ is a retract of ${\sf RT}(A[f])$.\
\medskip

\noindent If $\gamma$ is projective then $\tilde{\delta '}\simeq\delta$ so $\delta\tilde{\gamma}\simeq\tilde{\delta '}\tilde{\gamma}\leq {\rm id}_{T(A[f])}\leq\delta\tilde{\gamma}$, so $\alpha\beta$ is isomorphic to the identity on ${\sf RT}(A[f])$ and this topos is a retract of ${\sf RT}(B)$.
\medskip

\noindent v) is obvious.\end{proof}
\begin{corollary}If $A$ is a decidable pca, then every realizability topos which is a subtopos of ${\sf RT}(A)$ is a retract of ${\sf RT}(A[f])$ for some partial function $f:A\to A$.

Every realizability topos which is a subtopos of $\Eff$ is equivalent to one of the form ${\sf RT}({\cal K}_1[f])$ for some partial function on the natural numbers.\end{corollary}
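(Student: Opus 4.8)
The plan is to derive both statements from Theorem~\ref{discthm} by manufacturing, for each subtopos, a \emph{discrete} computationally dense applicative morphism into the ambient pca which induces the given inclusion. So suppose ${\sf RT}(B)$ is a subtopos of ${\sf RT}(A)$; we may assume $B$ is nontrivial, the trivial case being degenerate. The inclusion is in particular a geometric morphism ${\sf RT}(B)\to {\sf RT}(A)$, so by Corollary~\ref{cdcharcor} (using Theorem~\ref{ptj}) it is induced by a computationally dense applicative morphism $\gamma :A\to B$, and since the morphism is an inclusion, $\gamma$ induces an inclusion in the sense of Corollary~\ref{incchar}. By Corollary~\ref{amtypescor}i) such a $\gamma$ is decidable. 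Thus the only hypothesis of Theorem~\ref{discthm} still to be verified is that $\gamma$ be discrete, and this is exactly where decidability of $A$ enters.

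The key lemma, and the heart of the matter, is: \emph{if $A$ is a decidable pca and $B$ is nontrivial, then every decidable applicative morphism $\gamma :A\to B$ is discrete.} I would prove this by contradiction. Let $d_A\in A$ decide equality in $A$ (Definition~\ref{almosttotal}), let $r\in B$ realize $\gamma$ (Definition~\ref{apmordef}), and let $d_B\in B$ witness decidability of $\gamma$ (Definition~\ref{amtypes}i). Suppose $a\neq a'$ in $A$ with a common realizer $b\in\gamma (a)\cap\gamma (a')$; fix $c\in\gamma (d_A)$ and set $e=r(rcb)b$. The element $rcb$ lies in $\gamma (d_Aa)$, and applying $r$ once more yields the single element $e$: reading the second occurrence of $b$ as lying in $\gamma (a)$ gives $e\in\gamma (d_Aaa)=\gamma ({\sf T}_A)$, whereas reading it as lying in $\gamma (a')$ gives $e\in\gamma (d_Aaa')=\gamma ({\sf F}_A)$. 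Hence $e\in\gamma ({\sf T}_A)\cap\gamma ({\sf F}_A)$, and applying $d_B$ forces ${\sf T}_B=d_Be={\sf F}_B$, so $B$ is trivial --- contrary to assumption. Therefore $\gamma (a)\cap\gamma (a')=\emptyset$ whenever $a\neq a'$, i.e.\ $\gamma$ is discrete.

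With the lemma, the first statement is immediate: for decidable $A$ the morphism $\gamma$ above is discrete, computationally dense, and induces an inclusion, so Theorem~\ref{discthm}iii) produces a partial function $f:A\to A$ with ${\sf RT}(B)$ a retract of ${\sf RT}(A[f])$. For the second statement I take $A={\cal K}_1$, which is decidable, so the same reasoning gives a discrete, computationally dense $\gamma :{\cal K}_1\to B$ inducing an inclusion. To promote ``retract'' to ``equivalence'' via Theorem~\ref{discthm}v) I must in addition show that $\gamma$ is projective. Here I would invoke Theorem~\ref{amtypesthm}vii): up to isomorphism there is a unique decidable applicative morphism ${\cal K}_1\to B$, and it may be realized single-valuedly by sending $n$ to the Curry numeral $\overline{n}\in B$ (a realizer exists because every pca represents all partial computable functions, in particular Kleene application on Curry numerals, and a decidability element exists since the two Booleans of ${\cal K}_1$ are distinct recognisable numerals). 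Being single-valued this morphism is projective; as $\gamma$ is decidable it is isomorphic to it, hence projective. Then $\gamma :{\cal K}_1\to B$ is projective and induces an inclusion, so Theorem~\ref{discthm}v) yields ${\sf RT}(B)\simeq {\sf RT}({\cal K}_1[f])$ for a partial function $f$ on the natural numbers.

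The main obstacle is the discreteness lemma: everything else is assembly of results already in hand, but discreteness of $\gamma$ is precisely what licenses the passage to the relative-recursion pca $A[f]$, and it is exactly there that decidability of $A$ (respectively of ${\cal K}_1$) is genuinely consumed. A secondary point needing care is the projectivity of the canonical morphism ${\cal K}_1\to B$, since that is what upgrades the mere retract of the first statement to the equivalence asserted in the second.
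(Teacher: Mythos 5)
Your proposal is correct and follows essentially the same route as the paper's own proof: both reduce everything to Theorem~\ref{discthm} via the key observation that decidability of $A$ forces every computationally dense $\gamma :A\to B$ to be discrete, and both obtain the second statement by noting that the essentially unique decidable morphism ${\cal K}_1\to B$ is in addition projective, so that Theorem~\ref{discthm}v) upgrades the retract to an equivalence. The one point of divergence is how discreteness is established. The paper argues functorially in one line: since $A$ is a decidable pca, $(A,\{\cdot\})$ is a decidable object of ${\rm Ass}(A)$, and $\gamma ^{\ast}$ (which preserves finite limits and, $\gamma$ being a decidable morphism by Corollary~\ref{amtypescor}i), the object $2$) carries it to a decidable, hence discrete, object of ${\rm Ass}(B)$ --- which says exactly that $\gamma$ is discrete. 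You instead unwind this into an explicit computation with $d_A$, $r$ and $d_B$; your element $e=r(rcb)b$ landing in $\gamma ({\sf T}_A)\cap\gamma ({\sf F}_A)$ is precisely the realizer the categorical argument would produce, so the two proofs have the same mathematical content, yours being the more elementary, element-level rendering. One shared caveat: both arguments need ${\sf T}_B\neq {\sf F}_B$, i.e.\ $B$ nontrivial, since otherwise ``decidable implies discrete'' fails; and the trivial case is genuinely exceptional, because ${\sf RT}(1)\simeq {\rm Set}$ is a subtopos of ${\sf RT}(A)$ which is not a retract of any ${\sf RT}(A[f])$ (by Corollary~\ref{cdcharcor} such a retraction would require a computationally dense applicative morphism $1\to A[f]$, which cannot exist when $A[f]$ has more than one element). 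You state the nontriviality assumption explicitly, whereas the paper leaves it implicit, so this is not a gap relative to the paper's own standard of rigor.
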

\begin{proof} Both statements follow from theorem~\ref{discthm}, sinca if $A$ is decidable, then for every computationally dense applicative morphism $\gamma :A\to B$ we have that $\gamma ^{\ast}(A,\{\cdot\} )$ is decidable in ${\rm Ass}(B)$, hence discrete; and therefore $\gamma$ is discrete. For the second statement, note that the essentially unique decidable applicative morphism ${\cal K}_1\to B$ is discrete and projective.\end{proof}

\section{Local Operators in Realizability Toposes}
Local operators ($j$-operators, Lawvere-Tierney topologies) in the Effective topos have been studied in \cite{HylandJ:efft,PittsAM:thet,LeeS:basse,OostenJ:realop}. We quickly recall some basic facts which readily generalize to arbitrary realizability toposes.

Let $A$ be a pca. For subsets $U,V$ of $A$ we denote by $U\Rightarrow V$ the set of all elements $a\in A$ which satisfy: for every $x\in U$, $ax\dar$ and $ax\in V$. We write $U\wedge V$ for the set $\{ \pi ab\, |\, a\in U,b\in V\}$. The powerset of $A$ is denoted ${\cal P}(A)$.

Every local operator in ${\sf RT}(A)$ is represented by a function $J: {\cal P}(A)\to {\cal P}(A)$ for which the sets\begin{rlist}
\item $\bigcap_{U\subseteq A}U\Rightarrow J(U)$
\item $\bigcap_{U\subseteq A}JJ(U)\Rightarrow J(U)$
\item $\bigcap_{U,V\subseteq A}(U\Rightarrow V)\Rightarrow (J(U)\Rightarrow J(V))$\end{rlist}
are all nonempty. A map $J$ for which just the set iii) is nonempty, is said to represent a {\em monotone map\/} on $\Omega$. Abusing language, we shall just speak of ``local operators'' and ``monotone maps'' when we mean the maps representing them.

\begin{example}\label{locopexamples}\em Important examples of local operators are:\begin{arlist}
\item The identity map on ${\cal P}(A)$; this is the least local operator, and denoted by $J_{\bot}$. Its category of sheaves is just ${\sf RT}(A)$ itself.
\item The constant map with value $A$. This is the largest local operator, denoted $J_{\top}$; its category of sheaves is the trivial topos.
\item The map which sends every nonempty set to $A$, and the empty set to itself. This is the $\neg\neg$-operator, and we shall also denote it by $\neg\neg$. Its category of sheaves is Set.
\item Suppose $\gamma :A\to B$ is a computationally dense applicative morphism, inducing $\tilde{\gamma}:T(A)\to T(B)$ and its right adjoint $\delta$ by the theory of section~\ref{geommorsection}. The map $J:{\cal P}(A)\to {\cal P}(A)$ which sends the empty set to itself, and every nonempty $U\subseteq A$ to $\delta\tilde{\gamma}(U)$, is a local operator; its category of sheaves is the image of the geometric morphism ${\sf RT}(B)\to {\sf RT}(A)$ induced by $\gamma$.
\end{arlist}\end{example}

There is a partial order on local operators: $J\leq K$ iff the set $$\bigcap_{U\subseteq A}J(U)\Rightarrow K(U)$$ is nonempty (strictly speaking this gives a preorder on representatives of local operators). Every local operator is represented by a map $J$ which preserves inclusions (\cite{LeeS:basse}, Remark 2.1). If $M:{\cal P}(A)\to {\cal P}(A)$ is a monotone map, there is a least local operator $J_M$ such that $M\leq J_M$: it is given by
$$J_M(U)\; =\; \bigcap\{ Q\subseteq A\, |\, \{ {\sf T}\}\wedge U\subseteq Q\text{ and } \{ {\sf F}\}\wedge M(Q)\subseteq Q\}$$

It is a general fact of topos theory that for any monomorphism $m$ in a topos there is a least local operator which ``inverts $m$'', i.e.\ for which the sheafification of $m$ is an isomorphism. In ${\sf RT}(A)$, every object is covered by an $A$-assembly, so we need only consider monos into assemblies. Here, we restrict ourselves to two types of monos:\begin{enumerate}
\item Consider an assembly $(X,E)$ and the mono $(X,E)\to\nabla (X)$. Let $M$ be the monotone map sending $U\subseteq A$ to the set
$$\bigcup_{x\in X}E(x)\Rightarrow U$$
Then $J_M$ is the least local operator inverting the mono $(X,E)\to\nabla (X)$.
\item Consider a partial function $f:A\to A$ with domain $B\subseteq A$. We have the assemblies $(B,\{\cdot\} )$ and $(B,E)$ where $E(b)=\{\pi bf(b)\}$. The identity on $B$ is a map of assemblies $(B,E)\to (B,\{\cdot\} )$, tracked by $\pi _0$. The least local operator inverting this mono (``forcing $f$ to be realizable'') is $J_M$, where $M$ is the monotone map
$$U\;\mapsto\;\{\pi be\, |\, ef(b)\in U\}$$\end{enumerate}
The following theorem generalizes a result by Hyland and Phoa (\cite{HylandJ:efft,PhoaW:relcet}). 
\begin{theorem}\label{relrecA}The category of sheaves for the local operator of type 2 above, is ${\sf RT}(A[f])$.\end{theorem}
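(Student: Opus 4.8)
The plan is to identify the type-2 local operator $J_M$ with the local operator $J_f$ whose category of sheaves is, by construction, ${\sf RT}(A[f])$, and then to argue that two equivalent local operators have the same subtopos of sheaves. Recall from section~\ref{relrecsection} that $\iota_f:A\to A[f]$ is a computationally dense (and projective) applicative morphism inducing an inclusion ${\sf RT}(A[f])\to{\sf RT}(A)$. By Example~\ref{locopexamples}.4, since $\iota_f$ is an inclusion, this subtopos is exactly the sheaves for the local operator $J_f$ given on nonempty $U$ by $J_f(U)=\delta\tilde{\iota_f}(U)$, where $\delta$ is the right adjoint of $\tilde{\iota_f}:T(A)\to T(A[f])$. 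As a subtopos is determined by its local operator up to the equivalence generated by $\leq$, it suffices to prove $J_M\leq J_f$ and $J_f\leq J_M$.

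For $J_M\leq J_f$ I would show that the inclusion ${\sf RT}(A[f])\to{\sf RT}(A)$ inverts the type-2 mono $(B,E)\to(B,\{\cdot\})$. Indeed $f$ is representable w.r.t.\ $\iota_f$: there is $u\in A[f]$ with $ub=f(b)$ for every $b\in B$. Since $\iota_f$ is the identity on underlying sets, the inverse image functor $\iota_f^{\ast}$ sends this mono to the identity on $B$ between the assemblies $(B,\{b\})$ and $(B,\{\pi b\,f(b)\})$ in ${\rm Ass}(A[f])$; the element $\langle x\rangle\pi x(ux)$ tracks the reverse map, so $\iota_f^{\ast}$ turns the mono into an isomorphism. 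As the inverse image of the inclusion is, up to the equivalence $\mathrm{Sh}_{J_f}\simeq{\sf RT}(A[f])$, the associated-sheaf functor and restricts on assemblies to $\iota_f^{\ast}$, the operator $J_f$ inverts the mono. Since $J_M$ is by definition the \emph{least} local operator inverting it, $J_M\leq J_f$.

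The reverse inequality $J_f\leq J_M$ is the main obstacle. Here I would unwind the fixpoint description of $J_M$: from the formula for the least local operator above the monotone map $M$, the set $J_M(U)$ is the least $Q\subseteq A$ containing $\pi{\sf T}u$ for each $u\in U$ and containing $\pi{\sf F}(\pi b e)$ whenever $b\in B$, $e\in A$ and $ef(b)\in Q$. Thus $J_M(U)$ is precisely the set of codes of well-founded $f$-oracle computations whose leaves lie in $U$: a node $\pi{\sf F}(\pi b e)$ queries the oracle at $b$, receives $f(b)$, and continues with $e\,f(b)$. This is exactly the combinatorial content of the construction of $A[f]$ in \cite{OostenJ:genfrr}, where elements of $A[f]$ are codes for such oracle dialogues and application runs the dialogue answering queries by $f$. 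I would therefore match $\delta\tilde{\iota_f}(U)$ with $J_M(U)$ by exhibiting a single $a\in A$ compiling an $A[f]$-realizer that lands in $U$ into the tagged-tree format of $J_M(U)$, uniformly in $U$. Concretely this reduces to observing that $J_M$ contains $U$ via the ${\sf T}$-tag (the unit step) and is closed under one oracle query followed by a continuation in $J_M$ (the ${\sf F}$-step), and then translating an arbitrary element of $\delta\tilde{\iota_f}(U)$ by well-founded recursion along its query tree, realized uniformly through the recursion combinator of $A$.

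Combining the two inequalities yields $J_M\equiv J_f$, so their subtoposes of sheaves coincide and the category of sheaves for the type-2 operator is ${\sf RT}(A[f])$. The delicate point throughout is securing an explicit handle on the right adjoint $\delta$ and on the construction of $A[f]$; once the oracle-tree reading of $J_M(U)$ is in place both realizers are essentially fixed re-codings, so the real work lies in setting up that dictionary between $\delta\tilde{\iota_f}$ and the inductively generated $J_M$ carefully enough to extract the uniform realizers.
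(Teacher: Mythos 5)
Your overall strategy coincides with the paper's: both arguments reduce the theorem to showing that the type-2 operator $J_M$ and the operator $J_f$ of item 4 of Example~\ref{locopexamples} induced by $\iota _f$ are isomorphic, and then conclude that their categories of sheaves agree. Within that strategy, your proof of $J_M\leq J_f$ takes a genuinely different route: you invoke the characterization of $J_M$ as the least local operator inverting the mono $(B,E)\to (B,\{\cdot\} )$, and observe that $\iota _f^{\ast}$ (hence $J_f$-sheafification, under the equivalence of the image with ${\sf RT}(A[f])$) inverts this mono because $f$ is representable in $A[f]$. The paper instead stays below the topos level: it exhibits a concrete $a\in A$ with $ae[b]=\pi {\sf F}b$ and $ae[b,c]\simeq ec$, so that $ae\,{\cdot}^f\,b\simeq ef(b)$, which gives $M\leq J_f$ directly and then $J_M\leq J_f$ by leastness of $J_M$ over $M$. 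Both are correct; yours is more conceptual and avoids handling the $\cdot ^f$ dialogue format in this direction, at the price of leaning on the (stated, not proved) identification of $J_M$ as the least mono-inverting operator.

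The converse direction $J_f\leq J_M$ is where the paper's real work lies, and there your proposal is only a plan. Two concrete pieces are missing. First, you never obtain an explicit description of the right adjoint $\delta$, so $J_f(U)=\delta\tilde{\iota _f}(U)$ has no concrete form to translate from; the paper computes $\delta (U)=\{ \pi ae\, |\, e\,{\cdot}^f\,a\in U\}$ and verifies both the adjunction and the applicativity of $\delta$, a verification that itself uses the nontrivial fact that $A[f]$-abstractions $\langle x\rangle t(x,\vec{a})$ can be found $A$-computably in the parameters $\vec{a}$. Second, the ``well-founded recursion along the query tree, realized uniformly through the recursion combinator'' must actually be implemented and proved correct: the paper takes $\alpha\in\bigcap _U U\Rightarrow J_M(U)$ and $\zeta\in\bigcap _U M(J_M(U))\Rightarrow J_M(U)$, builds $\gamma$ by the recursion theorem so that $\gamma ea\sigma$ case-splits on the tag of $e([a]\ast\sigma )$, and then proves by downward induction along a computation sequence $a_1,\ldots ,a_n$ for $e\,{\cdot}^f\,a$ that $\gamma ea[a_1,\ldots ,a_k]\in J_M(U)$ for all $k$, so that finally $\gamma ea[\, ]\in J_M(U)$. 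Your oracle-dialogue reading of the inductive definition of $J_M(U)$ is exactly right, and the two closure properties you isolate correspond precisely to the realizers $\alpha$ and $\zeta$; but the proposal explicitly defers ``setting up that dictionary'' rather than carrying it out, so as written it is a correct outline whose technical core --- the content of the paper's proof --- remains to be supplied.
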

\begin{proof} We refer to \cite{OostenJ:genfrr} for details on $A[f]$. The underlying set of $A[f]$ is $A$; the application map of $A[f]$ is denoted $a,b\mapsto a{\cdot}^fb$.

It follows from the construction of the elements {\sf k} and {\sf s} in $A[f]$, that if $t(x,a_1,\ldots ,a_n)$ is a term built from variable $x$, parameters $a_1,\ldots ,a_n\in A$ and the application of $A[f]$, that the element $\langle x\rangle t(x,\vec{a})$ of $A[f]$ can be obtained computably in $A$ from the parameters $\vec{a}$.

The computationally dense applicative morphism $\iota _f:A\to A[f]$ is just the identity function, and the right adjoint $\delta :T(A[f])\to T(A)$ is given by
$$\delta (U)\; =\; \{\pi ae\, |\, e{\cdot}^fa\in U\}$$
Indeed, ${\rm id}_{T(A)}\leq\delta\tilde{\iota _f}=\delta$ because we can find, $A$-computably in $a$, an element $\xi _a$ satisfying $\xi _a{\cdot}^fx=a$ for all $x$. Also, $\delta =\tilde{\iota _f}\delta\leq {\rm id}_{T(A[f])}$ by simply evaluating in $A[f]$. We need to see that $\delta$ is applicative; but if $U{\cdot}^fV$ is defined in $T(A[f])$ and $\pi ae\in\delta (U)$, $\pi bc\in\delta (V)$ then
$$\pi (\pi ab)(\langle x\rangle (e{\cdot}^f(\pi _0x)){\cdot}^f(c{\cdot}^f(\pi _1x)))$$
is an element of $\delta (U{\cdot}^fV)$ and we noted that this element can be obtained $A$-computably from $a,e,b,c$.
\medskip

So, by Example~\ref{locopexamples}, item 4, the local operator on ${\sf RT}(A)$ for which the category of sheaves is ${\sf RT}(A[f])$, sends $U$ to $\{\pi ae\, |\, e{\cdot}^fa\in U\}$. Let us call this map $J_f$.

On the other hand, the least local operator which forces the partial function $f$ to be realizable, is the map $J_M$ where $M$ is the monotone map
$$U\,\mapsto\,\{\pi be\, |\, ef(b)\in U\}$$
We need to prove $J_M\leq J_f$ and $J_f\leq J_M$.

By $[a_1,\ldots ,a_n]$ we denote some standard coding in $A$ of the $n$-tuple $a_1,\ldots ,a_n$. We write $[\, ]$ for the code of the empty tuple. The symbol $\ast$ is used for ($A$-computable) concatenation of tuples: so $$[a_1,\ldots ,a_n]\ast [b_1,\ldots ,b_m]=[a_1,\ldots ,a_n,b_1,\ldots ,b_m]$$

The definition of $a{\cdot}^fb=c$ is as follows:\begin{itemize}
\item[] $a{\cdot}^fb=c$ if and only if either $a[b]=\pi {\sf T}c$, or there is a sequence $a_1,\ldots ,a_n$ such that $a[b]=\pi {\sf F}d$ for some $d$ such that $f(d)=a_1$, and for all $1<k\leq n$, $a[b,a_1,\ldots ,a_{k-1}]=\pi {\sf F}d$ for some $d$ such that $f(d)=a_k$, and moreover, $a[b,a_1,\ldots ,a_n]=\pi {\sf T}c$
\end{itemize}
Let us call such a sequence $a_1,\ldots ,a_n$ a {\em computation sequence\/} for $a{\cdot}^fb$.

Now clearly, if $a\in A$ satisfies $ae[b]=\pi {\sf F}b$ and $ae[b,c]\simeq ec$ for all $e,b,c$, then we have $ae{\cdot}^fb\simeq ef(b)$. Hence, $M\leq J_f$ and therefore $J_M\leq J_f$ by definition of $J_M$.

For the converse, let $\alpha\in\bigcap_{U\subseteq A}U\Rightarrow J_M(U)$ and $\zeta\in\bigcap _{U\subseteq A}M(J_M(U))\Rightarrow J_M(U)$. By the recursion theorem in $A$, there is an element $\gamma\in A$ such that for all $e,a,\sigma$:
$$\begin{array}{lll}\gamma ea\sigma\ & \preceq & {\sf If}\;\; \pi _0(e([a]\ast\sigma )) \\
 & & {\sf then}\;\; \alpha (\pi _1(e([a]\ast\sigma ))) \\
 & & {\sf else}\;\;\zeta (\pi (\pi _1(e([a]\ast\sigma )))\langle x\rangle\gamma ea(\sigma\ast [x])) \end{array}$$
We claim: if $e{\cdot}^fa\in U$, and $a_1,\ldots ,a_n$ is a computation sequence for $e{\cdot}^fa$, then for all $0\leq k\leq n$,
$$\gamma ea[a_1,\ldots ,a_k]\;\in J_M(U)$$
For $k=0$, $[a_1,\ldots ,a_k]$ is $[\, ]$.

Of course, $\gamma ea[a_1,\ldots ,a_n]=e{\cdot}^fa$ since $e[a,a_1,\ldots ,a_n]=\pi {\sf T}(e{\cdot}^fa)$. Hence by assumption that $e{\cdot}^fa\in U$, we have $\gamma ea[a_1,\ldots ,a_n]=\alpha (e{\cdot}^fa)\in J_M(U)$.

Now suppose $k<n$ and $\gamma ea[a_1,\ldots ,a_{k+1}]\in J_M(U)$. Let $e[a,a_1,\ldots ,a_k]=\pi {\sf F}u_k$. We have
$$\gamma ea[a_1,\ldots ,a_k]=\zeta (\pi u_k\varepsilon )$$
where $\varepsilon =\langle x\rangle\gamma ea(\sigma\ast [x])$. Moreover, $f(u_k)=a_{k+1}$. We see that
$$\varepsilon f(u_k)=\gamma ea[a_1,\ldots ,a_{k+1}]\in J_M(U)$$
so $\pi u_k\varepsilon\in M(J_M(U))$, whence $\zeta (\pi u_k\varepsilon )\in J_M(U)$. This proves the claim.

We conclude that whenever $\pi ae\in J_f(U)$, that is $e{\cdot}^fa\in U$, we have $\gamma ea[\, ]\in J_M(U)$; so $J_f\leq J_M$ as desired.\end{proof} 
\medskip

\noindent As an example of a monomorphism of type 1, we consider the inclusion of assemblies $2\to\nabla (2)$. It is a result of Hyland, that the least local operator in $\Eff$ inverting this mono, is $\neg\neg$; we shall see whether this holds for arbitrary realizability toposes. The following lemma is from \cite{HylandJ:efft} and generalizes to arbitrary realizability toposes in a straightforward way.
\begin{lemma}\label{notnotchar} Let $J$ be a local operator. Then $\neg\neg\leq J$ if and only if the set $\bigcap_{a\in A}J(\{ a\} )$ is nonempty.\end{lemma}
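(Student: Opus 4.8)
The plan is to unwind both sides of the biconditional into elementary statements about realizers, and then bridge them using the monotonicity of $J$.

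First I would unfold what $\neg\neg\leq J$ means. By definition it says that $\bigcap_{U\subseteq A}\neg\neg(U)\Rightarrow J(U)$ is nonempty; let $e$ witness this. When $U=\emptyset$ we have $\neg\neg(U)=\emptyset$, so $\emptyset\Rightarrow J(\emptyset)=A$ and nothing is required; when $U$ is nonempty we have $\neg\neg(U)=A$, so $e\in A\Rightarrow J(U)$, meaning $ex\dar$ and $ex\in J(U)$ for every $x\in A$. Hence $\neg\neg\leq J$ is equivalent to the existence of a single element $e$ with $ex\in J(U)$ for all nonempty $U$ and all $x\in A$. The forward direction is then immediate: fixing any $x_0\in A$ (a pca is nonempty) and specializing $U$ to the singletons $\{a\}$ gives $ex_0\in J(\{a\})$ for every $a$, so $ex_0\in\bigcap_{a\in A}J(\{a\})$, which is therefore nonempty.

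For the converse I would start from some $c\in\bigcap_{a\in A}J(\{a\})$ and promote it from singletons to arbitrary nonempty sets. Since $J$ may be assumed to preserve inclusions, for any nonempty $U$ and any $a\in U$ we have $\{a\}\subseteq U$, hence $J(\{a\})\subseteq J(U)$ and so $c\in J(U)$; alternatively, applying the realizer of condition iii) to the identity combinator $\langle x\rangle x$, which realizes $\{a\}\Rightarrow U$ whenever $a\in U$, yields an element independent of $a$ lying in $J(U)$. Either way one obtains a fixed element $d$ with $d\in J(U)$ for every nonempty $U$, and then $e={\sf k}d$ satisfies $ex=d\in J(U)$ for all $x$ and all nonempty $U$, which realizes $\neg\neg\leq J$ by the equivalence above.

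The only delicate point is this last step: the choice of an element $a\in U$ is non-constructive, but it is harmless because it serves merely to verify the meta-level membership $d\in J(U)$, whereas the realizer $e={\sf k}d$ is a single element working uniformly in $U$ and $x$. I do not expect any serious obstacle beyond deciding whether to invoke the inclusion-preservation of $J$ directly or the explicit monotonicity combinator of condition iii).
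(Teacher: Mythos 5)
Your proof is correct. Note that the paper does not actually prove this lemma: it quotes it from Hyland's paper on the effective topos with the remark that the argument generalizes straightforwardly, so there is no in-paper proof to compare against; your argument (unwinding $\neg\neg\leq J$ to the existence of a single $e$ with $ex\in J(U)$ for all $x$ and all nonempty $U$, then promoting $c\in\bigcap_{a\in A}J(\{a\})$ to arbitrary nonempty $U$) is precisely the standard one this citation alludes to. One point worth flagging: of your two routes for the converse, prefer the one applying the realizer of condition iii) to the identity combinator, yielding the uniform element $g{\sf i}c\in J(U)$; the shortcut ``$J$ may be assumed to preserve inclusions'' silently replaces $J$ by an isomorphic representative, and strictly speaking one must then check that both sides of the biconditional are invariant under such a replacement (they are, by composing realizers, but your second route makes that extra verification unnecessary).
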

We can represent the object $2$ in ${\sf RT}(A)$ as the assembly $(\{ 0,1\} ,E)$ with $E(0)=\{ \overline{0}\}$ and $E(1)=\{ \overline{1}\} $ (recall that $\overline{0},\overline{1}$ are the first two Curry numerals). Therefore the least local operator inverting $2\to\nabla (2)$ is $J_M$ where 
$$M(U)\; =\; (\{ \overline{0}\}\Rightarrow U)\cup (\{ \overline{1}\}\Rightarrow U)$$
Note, that $M$ is also the least monotone map with the property that $M(\{\overline{0}\} )\cap M(\{\overline{1}\} )$ is nonempty, and therefore $J_M$ is the least local operator $J$ for which $J(\{\overline{0}\} )\cap J(\{\overline{1}\} )$ is nonempty.
\begin{lemma}\label{2nabla2lemma} The least local operator which inverts the inclusion $2\to\nabla (2)$ is (up to isomorphism) the map $J$ which sends $U\subseteq A$ to $\bigcup_{n\in\mathbb{N}}(\{\overline{n}\}\Rightarrow U)$.\end{lemma}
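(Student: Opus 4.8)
The plan is to show that the displayed map $J$, with $J(U)=\bigcup_{n\in\mathbb{N}}(\{\overline{n}\}\Rightarrow U)$, is a local operator isomorphic to the operator $J_M$ computed just above the lemma, where $M(U)=(\{\overline{0}\}\Rightarrow U)\cup(\{\overline{1}\}\Rightarrow U)$ and $J_M$ is the least local operator with $M\leq J_M$. Thus I must establish the two inequalities $J_M\leq J$ and $J\leq J_M$.

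First I would verify that $J$ is itself a local operator by exhibiting realizers for the three conditions (i)--(iii) listed before Example~\ref{locopexamples}. For (i) the combinator ${\sf k}$ works, since for $x\in U$ we have $({\sf k}x)\overline{0}=x\in U$, so ${\sf k}x\in J(U)$. For (iii), given $h\in U\Rightarrow V$ and $b\in J(U)$ with $b\overline{n}\in U$, the element $\langle z\rangle h(bz)$ sends $\overline{n}$ to $h(b\overline{n})\in V$ and hence lies in $J(V)$; abstracting over $h$ and $b$ gives the realizer. For (ii), if $a\in JJ(U)$ then $a\overline{n}\,\overline{m}\in U$ for some $n,m$; using the representability of a computable bijective pairing on the Curry numerals (with decoders $p_0,p_1$), the element $\langle z\rangle a(p_0z)(p_1z)$ sends the numeral coding $(n,m)$ into $U$, so it lies in $J(U)$, and abstracting over $a$ gives the realizer. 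Since $M(U)\subseteq J(U)$ for every $U$, the identity combinator realizes $M\leq J$; as $J$ is then a local operator lying above $M$ and $J_M$ is the least such, we obtain $J_M\leq J$.

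The substantial direction is $J\leq J_M$: I must produce a single element $s$, uniform in $U$, with $sa\in J_M(U)$ whenever $a\in J(U)$, i.e.\ whenever $a\overline{n}\in U$ for some $n$ that $s$ cannot inspect. Here I would use two closure properties of $J_M(U)$ that follow from the formula $J_M(U)=\bigcap\{Q\mid\{{\sf T}\}\wedge U\subseteq Q\text{ and }\{{\sf F}\}\wedge M(Q)\subseteq Q\}$: namely (T) $b\in U$ implies $\pi{\sf T}b\in J_M(U)$, and (F) if $g\overline{0}\in J_M(U)$ or $g\overline{1}\in J_M(U)$ then $\pi{\sf F}g\in J_M(U)$; the latter holds because $M$ preserves inclusions, so $J_M(U)$ is the least member of the defining family and hence satisfies $\{{\sf F}\}\wedge M(J_M(U))\subseteq J_M(U)$. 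Using the recursion theorem in $A$ I would define a walk $w$ with $w\,a\,\overline{k}\simeq\pi{\sf F}g_k$, where $g_k$ is defined by cases on whether its input numeral is zero so that $g_k\overline{0}=\pi{\sf T}(a\overline{k})$ and $g_k\overline{1}=w\,a\,\overline{k+1}$, and then set $sa=w\,a\,\overline{0}$.

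Correctness is by downward induction on the witness: I claim that if $a\overline{n}\in U$ for some $n\geq k$ then $w\,a\,\overline{k}\in J_M(U)$. If $a\overline{k}\in U$, then $g_k\overline{0}=\pi{\sf T}(a\overline{k})\in J_M(U)$ by (T), so (F) gives $w\,a\,\overline{k}=\pi{\sf F}g_k\in J_M(U)$; otherwise the witness satisfies $n\geq k+1$, the induction hypothesis yields $g_k\overline{1}=w\,a\,\overline{k+1}\in J_M(U)$, and (F) again gives $w\,a\,\overline{k}\in J_M(U)$ (note $g_k\overline{1}=\pi{\sf F}g_{k+1}\dar$ unconditionally, so this branch is always available). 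Taking $k=0$ shows $sa\in J_M(U)$ for every $a\in J(U)$, hence $J\leq J_M$ and $J\simeq J_M$. The main obstacle I anticipate is exactly this last construction: the realizer must succeed without knowing $n$, which forces the recursion theorem to build a self-referential counter that commits to only one disjunct of $M$ at each stage, together with the downward induction exploiting that the $\overline{0}/\overline{1}$ disjunction in $M$ requires only one true disjunct; I would need to check carefully that at each step the disjunct actually used is defined and lands in $J_M(U)$, the remaining combinatory bookkeeping being routine.
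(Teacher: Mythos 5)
Your proposal is correct, and it reaches the conclusion by a genuinely different route at the one substantial step. The paper's own proof consists of (a) the verification that $J(U)=\bigcup_{n}(\{\overline{n}\}\Rightarrow U)$ is a local operator --- which you carry out in essentially the same way (a ${\sf k}$-style realizer for $U\Rightarrow J(U)$, composition for monotonicity, and a representable recursive pairing of numerals for $JJ(U)\Rightarrow J(U)$) --- and (b) an appeal to Hyland's Proposition 16.4 of \cite{HylandJ:efft}, namely that any local operator $K$ with $K(\{\overline{0}\})\cap K(\{\overline{1}\})$ nonempty has $\bigcap_{n}K(\{\overline{n}\})$ nonempty; the paper does not reprove this but asserts it transfers from ${\cal K}_1$ to an arbitrary pca since its proof is ``basic recursion theory''. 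Combined with the observations that $J_M$ is the least local operator with $J_M(\{\overline{0}\})\cap J_M(\{\overline{1}\})$ nonempty and that $J$ is the least monotone map with $\bigcap_n J(\{\overline{n}\})$ nonempty, this gives both inequalities. You instead prove the hard direction $J\leq J_M$ directly: you extract the closure rules (T) and (F) from the least-fixed-point formula for $J_M$ (your justification that $J_M(U)$ itself is closed under (F), using that $M$ preserves inclusions, is correct), and you build a search realizer $w$ by the recursion theorem, proving correctness by downward induction from the unknown witness $n$. The two delicate points are exactly the ones you flag: the case distinction inside $g_k$ must be thunked so that $g_k\overline{1}$, and hence $w\,a\,\overline{k}$, is defined even when $a\overline{k}$ is not; and the case split lives only in the correctness proof, never in the computation, which is what lets the realizer succeed without knowing $n$. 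Both are handled by standard pca technique, of the same kind the paper itself uses in the proof of Theorem~\ref{relrecA}. In effect you have reproved the required instance of Hyland's result, and in fact your argument recovers the full generalization as a corollary: for any local operator $K$ with $K(\{\overline{0}\})\cap K(\{\overline{1}\})$ nonempty one has $J\leq J_M\leq K$, and applying a realizer for $J\leq K$ to the identity combinator, which lies in $\bigcap_n J(\{\overline{n}\})$, produces an element of $\bigcap_n K(\{\overline{n}\})$. The paper's citation buys brevity; your construction buys self-containedness and an explicit realizer.
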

\begin{proof} Martin Hyland showed in \cite{HylandJ:efft}, 16.4, that whenever $J$ is a local operator in $\Eff$ such that $J(\{ 0\} )\cap J(\{ 1\} )$ is nonempty, then $\bigcap_{n\in\mathbb{N}}J(\{ n\} )$ is nonempty. Since the tools for this proof were basic recursion theory, this proof generalizes to an arbitrary pca $A$ to yield: whenever $J$ is a local operator in ${\sf RT}(A)$ such that $J(\{\overline{0}\} )\cap J(\{\overline{1}\} )$ is nonempty, then $\bigcap_{n\in\mathbb{N}}J(\{\overline{n}\} )$ is nonempty.

Now the least monotone map $M$ such that $\bigcap_{n\in\mathbb{N}}M(\{\overline{n}\} )$ is nonempty, is the map $J$ in the statement of the lemma. So it remains to show that this is a local operator. Clearly, it is a monotone map, and certainly $\langle xy\rangle x$ is an element of $U\Rightarrow J(U)$ for all $U\subseteq A$. As to $J(J(U))\Rightarrow J(U)$, we note that we have uniform isomorphisms
$$\begin{array}{lll} J(J(U)) & \cong & \bigcup_n(\{ n\}\Rightarrow \bigcup_m(\{ m\}\Rightarrow U)) \\
 & \cong & \bigcup_{m,n}(\{ m\}\wedge\{ n\}\Rightarrow U) \\
  & \cong & \bigcup_k(\{ k\}\Rightarrow U)\; =\; J(U)\end{array}$$
The last isomorphism is because there exists a recursive pairing on the natural numbers which is a bijection from $\mathbb{N}\times\mathbb{N}$ to $\mathbb{N}$, and which is representable in $A$, as well as its unpairing functions.\end{proof}
\begin{theorem}\label{dmtheorem} For a pca $A$ the following three statements are equivalent:\begin{rlist}
\item The least local operator inverting $2\to\nabla (2)$ is $\neg\neg$.
\item There is an element $h\in A$ such that for every $a\in A$ there is a natural number $n$ satisfying $h\overline{n}=a$.
\item There exists a (necessarily essentially unique) geometric morphism ${\sf RT}(A)\to\Eff$.
\end{rlist}\end{theorem}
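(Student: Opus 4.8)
The plan is to establish the two equivalences i)$\Leftrightarrow$ii) and ii)$\Leftrightarrow$iii) separately. The first is an almost immediate consequence of Lemmas~\ref{2nabla2lemma} and~\ref{notnotchar}; the second unpacks Corollary~\ref{amtypescor}iii) through Johnstone's characterization of computational density, Theorem~\ref{cdptj}. In both halves the real content has already been isolated in earlier results, so the argument is mostly a matter of matching up quantifiers.

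For i)$\Leftrightarrow$ii), I would begin from Lemma~\ref{2nabla2lemma}, which presents the least local operator $J$ inverting $2\to\nabla(2)$ as $J(U)=\bigcup_{n\in\mathbb{N}}(\{\overline{n}\}\Rightarrow U)$. First I would note that $J\leq\neg\neg$ always: as observed just before Lemma~\ref{2nabla2lemma}, $J$ is the least local operator with $J(\{\overline 0\})\cap J(\{\overline 1\})$ nonempty, while by Example~\ref{locopexamples}, item 3, $\neg\neg$ satisfies $\neg\neg(\{\overline 0\})\cap\neg\neg(\{\overline 1\})=A\neq\emptyset$. Hence statement i) is equivalent to $\neg\neg\leq J$, which by Lemma~\ref{notnotchar} holds exactly when $\bigcap_{a\in A}J(\{a\})$ is nonempty. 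Unwinding the definition of $J$, an element $h$ lies in this intersection iff for every $a\in A$ there is some $n$ with $h\in\{\overline{n}\}\Rightarrow\{a\}$, i.e.\ with $h\overline{n}=a$; this is exactly condition ii).

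For ii)$\Leftrightarrow$iii), I would invoke Corollary~\ref{amtypescor}iii): a (then essentially unique) geometric morphism ${\sf RT}(A)\to\Eff$ exists iff the essentially unique decidable applicative morphism $\gamma:{\cal K}_1\to A$ of Theorem~\ref{amtypesthm}vii) is computationally dense. As a concrete representative of $\gamma$ I would take $\gamma(n)=\{\overline{n}\}$: this is applicative because $A$ carries an element representing Kleene application on Curry numerals, and decidable because Curry numerals have decidable equality in $A$; since computational density is invariant under isomorphism of applicative morphisms, working with this representative is harmless. Applying Theorem~\ref{cdptj} to $\gamma$, computational density means there are $r\in A$ and $g:A\to\mathbb{N}$ with $r\,b'=a$ for all $a\in A$ and all $b'\in\gamma(g(a))=\{\overline{g(a)}\}$, that is $r\overline{g(a)}=a$ for every $a$. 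Setting $h:=r$ recovers ii); conversely, given $h$ as in ii), choosing for each $a$ some $n$ with $h\overline{n}=a$ defines a function $g$ with $h\overline{g(a)}=a$, so $\gamma$ is computationally dense.

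The substantive work is all borrowed: Lemma~\ref{2nabla2lemma} rests on Hyland's passage from $J(\{\overline 0\})\cap J(\{\overline 1\})\neq\emptyset$ to $\bigcap_n J(\{\overline{n}\})\neq\emptyset$, and Corollary~\ref{amtypescor}iii) packages the topos-theoretic reduction to a single applicative morphism. The only point that warrants attention is that Johnstone's two existential witnesses $r$ and $g$ collapse onto the single combinator $h$ of ii): the function $g$ need not be computable or even definable in any effective sense, which is precisely why ii) is a condition on one element of $A$ rather than on a computable enumeration of $A$ by Curry numerals.
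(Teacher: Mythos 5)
Your proof is correct and takes essentially the same route as the paper: i)$\Leftrightarrow$ii) by combining Lemma~\ref{notnotchar} with Lemma~\ref{2nabla2lemma}, and ii)$\Leftrightarrow$iii) by identifying ii) with computational density of the decidable applicative morphism $n\mapsto\overline{n}:{\cal K}_1\to A$ (via Theorem~\ref{cdptj}) and then invoking Corollary~\ref{amtypescor}iii). The paper's own proof is a terser statement of exactly this argument, and your quantifier-level details (including the observation that $J\leq\neg\neg$ always holds, and that Johnstone's function $g$ need not be effective) are accurate fillings-in of what the paper calls ``a triviality.''
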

\begin{proof} This is now a triviality: given the characterizations of Lemma~\ref{notnotchar} and Lemma~\ref{2nabla2lemma}, we have the equivalence of i) and ii). But clearly, ii) is equivalent to the statement that the essentially unique decidable applicative morphism ${\cal K}_1\to A$, which is the map sending $n$ to $\overline{n}$, is computationaly dense. And that is equivalent to iii).\end{proof}
\begin{remark}\label{ptjremarks}\em We are grateful to Peter Johnstone for the following remark. As pointed out by Olivia Caramello in \cite{CaramelloO:demt}, the least local operator inverting $2\to\nabla (2)$, is also the least local operator for which the category of sheaves is a {\em De Morgan topos} (A topos is De Morgan if 2 is a $\neg\neg$-sheaf).

This yields another proof of iii)$\Rightarrow$i) in Theorem~\ref{dmtheorem}: if $f:{\sf RT}(A)\to {\sf RT}(B)$ is a geometric morphism, then $f$ restricts to a geometric morphism ${\sf RT}(A)_{\rm dm}\to {\sf RT}(B)_{\rm dm}$ (where ${\cal E}_{\rm dm}$ denotes the largest De Morgan subtopos of $\cal E$). This is immediate, because $f^*$ preserves both $2$ and $\nabla (2)$. This means that if ${\sf RT}(B)_{\rm dm}={\rm Set}$, then also ${\sf RT}(A)_{\rm dm}={\rm Set}$.\end{remark}
\begin{example}\label{effnumexamples}\em Peter Johnstone has suggested the terminology {\em effectively numerical\/} for a pca $A$ satisfying ii) of \ref{dmtheorem}. Clearly, if a pca is effectively numerical, it must be countable. The pca ${\cal K}_2^{\rm rec}$ is effectively numerical.

In order to see a countable pca which is nevertheless not effectively numerical, consider a nonstandard model of Peano Arithmetic $A$. $A$ is a pca if we define $ab=c$ to hold precisely if the formula $\exists x(T(a,b,x)\wedge U(x)=c)$ is true in $A$ (here $T$ and $U$ are Kleene's well-known computation predicate and output function; these things can be expressed in the language of Peano Arithmetic, hence interpreted in $A$). $A$ will then satisfy the axioms for a pca, since these are consequences of Peano Arithmetic. In $A$, the Curry numerals can be identified with the standard part of $A$. Now consider the following $\mathbb{N}$-indexed family of formulas in one variable $x$:
$$\Phi _a(x)\; =\;\{ \forall y(T(a,n,y)\to U(y)\neq x)\, |\, n\in\mathbb{N}\}$$
By \cite{KayeR:modpa}, Theorem~11.5, $A$ is {\em saturated\/} for types like this: there is an element $\xi\in A$ such that $\Phi _a(\xi )$ holds in $A$. That means, there is no $n$ such that $an=\xi$. Since $a$ is arbitrary, we see that $A$ cannot be effectively numerical.\end{example}
We conclude this paper with a characterization of those local operators in ${\sf RT}(A)$ for which the category of sheaves is ${\sf RT}(A[f])$ for some partial function $f$ on $A$. From Theorem~\ref{discthm} we know that if $\gamma :A\to B$ is discrete and projective and induces an inclusion, then this inclusion is of the form ${\sf RT}(A[f])\to {\sf RT}(A)$. Moreover, we know then that the local operator $J$ corresponding to this inclusion has the following properties:\begin{arlist}
\item $J(\{ a\} )\cap J(\{ b\} )=\emptyset$ whenever $a\neq b$ (we may call $J$ {\em discrete})
\item $J$ preserves unions.\end{arlist}
\begin{proposition} Suppose $J$ is a discrete local operator which preserves unions. Then there is a partial function $f$ on $A$ such that $J$ is isomorphic to $J_f$, the least local operator forcing $f$ to be realizable.\end{proposition}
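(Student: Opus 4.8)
The plan is to read the partial function $f$ directly off the action of $J$ on singletons, and then to identify $J$ with $J_f$ by comparing both operators with the monotone map $M_f(U)=\{\pi be\,|\,ef(b)\in U\}$, whose least local operator is exactly $J_f$ (this is Theorem~\ref{relrecA}, which gives $J_f=J_{M_f}$). The point of routing everything through $M_f$ is that it is defined using ordinary application, so I never have to reason about the oracle application ${\cdot}^f$ of $A[f]$ nor redo the recursion-theoretic argument in the proof of Theorem~\ref{relrecA}.

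First I would normalize $J$. Replacing $J$ by an inclusion-preserving representative and using that $J$ preserves unions, I may assume $J(U)=\bigcup_{a\in U}J(\{a\})$ for every $U\subseteq A$. Writing $V_a:=J(\{a\})$, discreteness says the family $\{V_a\}_{a\in A}$ is pairwise disjoint, so the rule
\[ f(b)=a \iff b\in V_a \]
defines a single-valued partial function $f$ on $A$, with domain $\bigcup_a V_a$. Here discreteness is precisely what is needed for $f$ to be well defined, and union preservation is what gives the on-the-nose identity $J(U)=\bigcup_{c\in U}V_c$ that I will use repeatedly.

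Next I would prove $J\leq J_f$. Let $\eta_f$ realize $M_f\leq J_f$, available since $J_f=J_{M_f}$ is a local operator lying above $M_f$. Given $v\in J(U)$, union preservation yields $v\in V_c$ for some $c\in U$, i.e.\ $f(v)=c\in U$; hence $(\langle y\rangle y)f(v)=f(v)\in U$, so $\pi v(\langle y\rangle y)\in M_f(U)$, and therefore $\eta_f(\pi v(\langle y\rangle y))\in J_f(U)$. Thus $\langle x\rangle\eta_f(\pi x(\langle y\rangle y))$ realizes $J\leq J_f$. For the reverse inequality I would invoke the universal property: since $J_f=J_{M_f}$ is the least local operator with $M_f\leq J_{M_f}$, it suffices to show $M_f\leq J$. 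Let $\zeta$ realize functoriality of $J$, i.e.\ $\zeta\in\bigcap_{P,Q}(P\Rightarrow Q)\Rightarrow(J(P)\Rightarrow J(Q))$. Given $\pi be\in M_f(U)$ we have $ef(b)\in U$, so $e\in(\{f(b)\}\Rightarrow U)$ and hence $\zeta e\in(V_{f(b)}\Rightarrow J(U))$; since $b\in V_{f(b)}$ by definition of $f$, we get $\zeta eb\in J(U)$. So $\langle x\rangle\zeta(\pi_1 x)(\pi_0 x)$ realizes $M_f\leq J$, whence $J_f\leq J$. Combining the two inequalities, $J$ is isomorphic to $J_f$.

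I expect the only genuinely delicate points to be the preliminary normalization (passing to an inclusion- and union-preserving representative so that $f$ is well defined and $J(U)=\bigcup_{c\in U}V_c$ holds exactly) and the clean exploitation of the universal property of $J_{M_f}$. It is this universal property that collapses the reverse inequality to the one-line check $M_f\leq J$ and lets me avoid the heavier computation-sequence machinery of Theorem~\ref{relrecA}. Once $f$ has been correctly identified via the disjoint family $\{V_a\}$, both inequalities reduce to short calculations with the unit and functoriality realizers of the two operators.
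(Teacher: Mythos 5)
Your proof is correct and takes essentially the same route as the paper's own: you define $f(b)=a$ iff $b\in J(\{a\})$ (well-defined by discreteness), establish $M_f\leq J$ via the monotonicity realizer and conclude $J_f\leq J$ from the leastness of $J_f=J_{M_f}$, and use union preservation to get $J\leq M_f\leq J_f$ via the pair $\pi v({\langle y\rangle y})$, exactly as the paper does with $\pi a{\sf i}$. The only cosmetic difference is your preliminary normalization step, which is unnecessary: the hypothesis that the map $J$ preserves unions already gives $J(U)=\bigcup_{a\in U}J(\{a\})$ on the nose.
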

\begin{proof} Define $f$ by: $f(a)=b$ if and only if $a\in J(\{ b\} )$. This is well-defined since $J$ is discrete. Let $M$ be the monotone map of the proof of Theorem~\ref{relrecA}, so $M(U)\; =\;\{ \pi ae\, |\, ef(a)\in U\}$ and $J_f$ is the least local operator majorizing $M$. Let $g$ realize the monotonicity of $J$: 
$$g\in\bigcap_{U,V\subseteq A}(U\Rightarrow V)\Rightarrow (JU\Rightarrow JV)$$
Now if $\pi ea\in M(U)$, then $e\in \{ f(a)\}\Rightarrow U$ so $ge\in J(\{ f(a)\} )\Rightarrow J(U)$, so $ge\in \{ a\}\Rightarrow J(U)$ (since $a\in J(\{ f(a)\} )$), so $gea\in J(U)$. This shows that $M\leq J$ and hence $J_f\leq J$. 

Conversely, if $a\in J(U)$ then since $J$ preserves unions, we have $a\in J(\{ x\} )$ for some $x\in U$, which means $f(a)\in U$, which implies that $\pi a{\sf i}$ (where $\sf i$ is such that ${\sf i}b=b$ for all $b\in A$) is an element of $M(U)$. So $J\leq M\leq J_f$. Note that we actually prove that $M$ is a local operator in this case!\end{proof}

\begin{small}
\bibliographystyle{plain}

\end{small}
\end{document}